         \newtheorem{definition}{Definition}[section]
	\newtheorem{remark}[definition]{Remark}
	\newtheorem{proposition}[definition]{Proposition}
	\newtheorem{theorem}[definition]{Theorem}
	\newtheorem{lemma}[definition]{Lemma}
	\newtheorem*{acknowledgement}{Acknowledgements}
	\def\Hom{{\rm{Hom}}}
	\def\Aut{{\rm{Aut}}}
	\def\Tr{{\rm{Tr}}}
	\def\Ind{{\rm{Ind}}}
	\def\Res{{\rm{Res}}}
	\def\Br{{\rm{Br}}}
	\def\per{{\rm{per}}}
	\def\Ind{{\rm{Ind}}}
	\def\Res{{\rm{Res}}}
	\def\Br{{\rm{Br}}}
	\def\CF{{\rm{CF}}}
	\def\Ho{{\rm{Ho^{b}}}}
	\def\Co{{\rm{Comp^{b}}}}
	\def\per{{\rm{-perm}}}
\title{Splendid and Perverse Equivalences}
\author{L{\'e}o Dreyfus-Schmidt}
\address{Universti\'e Paris Denis Diderot - Institut de Math\'ematiques de Jussieu - Paris Rive Gauche, Bat\^iment Sophie Germain, 75205 Paris Cedex 13, France }
\email{leo.dreyfus-schmidt@imj-prg.fr}
\begin{document}

\maketitle

\begin{abstract}
Inspired by the works of Rickard on splendid equivalences ([Ric96]) and of Chuang and Rouquier on perverse equivalences ([ChRo]), we are here interested in the combination of both, \textit{i.e.} a splendid perverse equivalence. This is naturally the right framework to understand the relations between global and local perverse equivalences between blocks of finite groups, as a splendid equivalence induces local derived equivalences via the Brauer functor. We prove that  under certain conditions, we have an equivalence between a perverse equivalence between the homotopy category of $p$-permutation modules and local derived perverse equivalences, in the case of abelian defect groups.
\end{abstract}

\tableofcontents
	
\setcounter{section}{-1}
\section{Motivation and notations}
We here work on the classical dynamic of global and local properties for the representation theory of a finite group $G$. We recall that by global one means a finite group $G$ while by local one means working at the level of centralizers (or normalizers) of $p$-subgroups of $G$.\\
A splendid complex clearly restricts to an equivalence between the homotopy categories of $p$-permutation modules. Surprisingly, the converse also holds. However if one was to add that the equivalences are moreover perverse, we only have that a perverse equivalence at the level of the homotopy categories of $p$-permutation modules implies a derived perverse equivalence. 
\\We shall see that in order to obtain local perverse derived equivalences, one should start with the stronger condition of a perverse homotopic equivalence on the homotopy category of $p$-permutation modules.\\
As one would hope to realize the global-local connection in a commutative diagram, we shall be working in the homotopy category of $p$-permutation modules rather than in the corresponding derived category. That way, we shall see that from a global perverse homotopy equivalence, one obtains local derived perverse equivalences. In our attempt to go back up from local to global, we will introduce the refined notion of perverse equivalence relative to a partial order. \\
Finally, we will illustrate this by a careful study of the cyclic case, and see that Rouquier's splendid complex ([Rou94]) does not necessarily realize a global perverse equivalence although it always induces locally perverse equivalences. Last but not least, we will make precise the connection between the local perversities of Rouquier's complex and the generalised decomposition numbers of the block.\\
\\The aim of Section 1 is to develop this theme of global versus local along Boltje and Xu's notion of $p$-permutation equivalence. In Section 2 and 3, we will make precise a result of Rickard on splendid complexes connecting a splendid tilting complex $X$ with its image by the Brauer functor $\Br_{\Delta Q}(X)$. In Section 4, we make the connection between perverse equivalences at the level of the centralizer of a $p$-group and at the level of the corresponding normalizer. Then in Section 5, we show that a global perverse homotopy equivalence give rises to local derived perverse equivalence. We then introduce the notion of perverse equivalence relative to a partial order in order to go back up, from the data of local perverse derived equivalence, to a global perverse homotopy equivalence. Then, we gather all of the above results and prove our main result. Finally we study in Section 6 Rouquier's splendid complex for a block with cyclic defect group by computing global and local perversities, and by connecting those to the generalised decomposition numbers attached to the block.

\begin{acknowledgement}
I wish to thank Rapha\"el Rouquier, who introduced me to the theory of perverse equivalences, and for his guidance throughout this research project. I also wish to thank Olivier Dudas for many helpful comments and suggestions. 
 \end{acknowledgement}

\subsection{Notations}
We denote by $(\mathcal{O},K,k)$ a $p$-modular system, \textit{i.e.} $\mathcal{O}$ is a discrete valuation ring, with field of fractions $K$ of characteristic $0$, large enough for all groups considered here, and residue field $k$ of characteristic $p>0$. For $G$ a finite group, we choose to define the diagonal of $G$ as $\Delta G:=\{(g,g^{-1})| g\in G\}$. For $p$ a prime number, the \textit{$p$-core} of $G$ is defined to be the largest normal $p$-subgroup of $G$ and is denoted by $O_{p}(G)$. An element $x\in G$ is said to be \textit{p-regular} if its order is prime to $p$. The set of $p$-regular elements of $G$ is denoted by $G_{reg}$. \\
If $H$ and $K$ are subgroups of $G$, we write $H\subseteq_{G}K$ to say that $H$ is a subgroup of $K$ up to conjugation in $G$.\\
For $A$ a symmetric $R$-algebra ($R$ either $k$ or $\mathcal{O}$), ${A}$\text{-mod} denotes the category of finitely generated $A$-modules. Let $\mathcal{C}$ be an additive category and $\mathcal{A}$ an abelian category. Then $\Co(\mathcal{C})$ denotes the category of bounded complexes of objects of $\mathcal{C}$, $\Ho(\mathcal{C})$ the homotopy category of $\Co(\mathcal{C})$ and $\mathcal{D}^{b}(\mathcal{A})$ denotes the bounded derived category of $\mathcal{A}$.\\
By $K_{0}(RG)$, we denote the Grothendieck group of finitely generated $RG$-modules. The isomorphism classes $[[S]]$ of simple $RG$-module form a $\mathbb{Z}$-basis of $K_{0}(RG)$. The space of class functions of $G$ taking values in $K$ will be denoted by $\CF(G,K)$. Let $e$ and $f$ denote the principal block idempotents of $kG$ and $kH$ respectively. For $Q\leq P$, we denote by $e_{Q}$ (resp. $f_{Q}$) the principal block idempotent of $kC_{G}(Q)$ (resp. $kC_{H}(Q)$).

\subsection{Reminder}
We will now recall some classical definitions. 

\begin{definition}Two finite groups $G$ and $H$ with a common Sylow $p$-subgroup $P$ share the same $p$-local structure if for every $Q_{1}$ and $Q_{2}$ subgroups of $P$ with $\theta:Q_{1}\rightarrow Q_{2}$ an isomorphism, then there is an element $g\in G$, such that $\theta(q)=q^{g}$ for all $q\in Q_{1}$ if and only if there is an element $h\in H$ such that $\theta(q)=q^{h}$ for all $q\in Q_{1}$.
\end{definition} 

For instance, if $H$ is a subgroup of $G$, $P$ is abelian and $H$ contains $N_{G}(P )$ then $H$ and $G$ have the same $p$-local structure. From now on, we will always assume that we are in this particular situation. 

\begin{definition}Let $G$ be a finite group and $Q$ a $p$-subgroup of $G$. We denote by $\Br_{Q}$ the Brauer functor $\Br_{Q}: {kG}\text{-mod} \rightarrow {kN_{G}(Q)}\text{-mod}$ defined for $M$ a $kG$-module by $$\Br_{Q}(M)=M^{Q}/(\sum_{P<Q}\Tr_{P}^{Q}M^{P}),$$ the quotient of $Q$-fixed points of M by the relative traces from all proper subgroups $P$ of $Q$ of the $P$-fixed points. We will also write $M(Q)$ for $\Br_{Q}(M)$.
\end{definition}

We will be mostly interested in a particular type of modules, the \textit{p-permutation modules} (see for example [Bro85]). These are direct summands of permutation modules. In view of Green's theory of vertices and sources, they are also known as \textit{trivial source modules}. We denote by $kG\text{\per}$ the full subcategory of $kG\text{-mod}$ of $p$-permutation modules. In what follows, we will consider the restriction of the Brauer functor $\Br_{Q}:kG\text{\per}\rightarrow kN_{G}(Q)\text{\per}$.\\ 
Note that if $M$ is a permutation module with a $G$-stable basis $X$, we have that $M(Q)\simeq k[X^{Q}]$. Considering the group algebra $kG$ as a $k[G\times G]$-module, $kG(\Delta Q)$ is naturally isomorphic to the group algebra $kC_{G}(Q)$. A key feature of the Brauer construction is the following isomorphism, for $M$ and $N$ $p$-permutation $kG$-modules, not only do we have $$
M(Q)\otimes_{k}N(Q)\stackrel{\sim}{\rightarrow}(M\otimes_{k}N)(Q),$$ but we also have the isomorphism $$
M(Q)\otimes_{kC_{G}(Q)}N(Q)\stackrel{\sim}{\rightarrow}(M\otimes_{kG}N)(Q).$$
Another important feature is that the vertex of an indecomposable $p$-permutation $kG$-module $M$ is precisely the maximal $p$-subgroup of $G$ such that $\Br_{Q}(M)\neq 0$. We now recall the very useful Brou{\'e}-Puig's parametrization of $p$-permutation modules. We put $\bar{N}_{G}(Q)=N_{G}(Q)/Q$

\begin{proposition}([Bro85,Theorem 3.2])
The correspondence $M\mapsto (Q,\Br_{Q}(M))$ for $Q$ a vertex of $M$ defines a bijection from the set of isomorphism classes of indecomposable $p$-permutation $kG$-modules to the set of conjugacy classes of pairs $(Q,N)$, where $Q$ is a $p$-subgroup of $G$ and $N$ is an isomorphism class of an indecomposable projective $k\bar{N}_{G}(Q)$-module.
\end{proposition}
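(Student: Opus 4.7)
The plan is to combine Green correspondence with the structure of the Brauer functor recalled above. For well-definedness, let $M$ be an indecomposable $p$-permutation $kG$-module with vertex $Q$, and let $f(M)$ denote its Green correspondent at $N_G(Q)$, an indecomposable $kN_G(Q)$-module with vertex $Q$. Since $Q \triangleleft N_G(Q)$ and $f(M)$ is a direct summand of $\Ind_Q^{N_G(Q)} k \simeq k[N_G(Q)/Q]$, the subgroup $Q$ acts trivially on $f(M)$, so $f(M)$ factors through $\bar{N}_G(Q)$; as a direct summand of the regular module $k\bar{N}_G(Q)$ it is then indecomposable projective over $k\bar{N}_G(Q)$.

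Next, I would identify $\Br_Q(M)$ with $f(M)$ as $k\bar{N}_G(Q)$-modules. Decomposing $\Res^G_{N_G(Q)}(M) \simeq f(M) \oplus M'$ as in Green's theorem, each indecomposable summand of $M'$ has vertex of the form $Q \cap Q^g$ for some $g \notin N_G(Q)$, which is strictly smaller than $Q$; by the vertex characterization via the Brauer construction recalled above, $\Br_Q$ annihilates such modules. On the $Q$-trivial module $f(M)$, on the other hand, the proper traces $\Tr_P^Q$ act as multiplication by the $p$-divisible integer $[Q:P]$, so $\Br_Q(f(M)) = f(M)$ and consequently $\Br_Q(M) \simeq f(M)$.

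For the inverse, given a pair $(Q,N)$, I would inflate $N$ along $N_G(Q) \twoheadrightarrow \bar{N}_G(Q)$ to a $kN_G(Q)$-module $\tilde{N}$. Because $N$ is a summand of $k\bar{N}_G(Q)$, its inflation $\tilde{N}$ is a summand of $\Ind_Q^{N_G(Q)} k$, hence $p$-permutation with vertex contained in $Q$; equality follows from $\Br_Q(\tilde{N}) = N \neq 0$. Its Green correspondent over $G$ is then an indecomposable $p$-permutation $kG$-module with vertex $Q$ sent to $(Q,N)$ by our map. Injectivity follows from injectivity of Green correspondence via the reconstruction above, and the bijection descends to $G$-conjugacy classes since replacing $Q$ by $Q^g$ conjugates $\Br_Q(M)$ into $\Br_{Q^g}(M)$ compatibly on both sides.

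The main technical obstacle is the identification $\Br_Q(M) \simeq f(M)$, which requires controlling the Brauer construction on indecomposable $p$-permutation summands whose vertex does not contain a conjugate of $Q$; once this is in hand, the rest is formal from Green correspondence together with the description of indecomposable projectives of $k\bar{N}_G(Q)$ as the indecomposable summands of the regular module.
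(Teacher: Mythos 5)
The paper does not prove this proposition; it is cited from [Bro85, Theorem 3.2] and used as a black box. Your proposal reconstructs the standard argument (essentially Brou\'e's original one): identify $\Br_Q(M)$ with the Green correspondent $f(M)$ of $M$ at $N_G(Q)$, then use that $f(M)$, having trivial source and vertex the normal subgroup $Q$ of $N_G(Q)$, is an inflation of an indecomposable projective $k\bar N_G(Q)$-module. This route is correct, and the ingredients you invoke (Green correspondence preserving trivial source, the characterization of the vertex as the maximal $Q$ with $\Br_Q(M)\neq 0$, vanishing of $\Tr_P^Q$ for $P<Q$ on a $Q$-trivial module in characteristic $p$) do what you want them to.

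One small inaccuracy: in the Green decomposition $\Res_{N_G(Q)}M\simeq f(M)\oplus M'$, the indecomposable summands of $M'$ have vertices contained in subgroups of the form $Q^g\cap N_G(Q)$ with $g\notin N_G(Q)$ (the family $\mathfrak{Y}$ of Green's theorem), not literally of the form $Q\cap Q^g$. This does not affect the conclusion you draw: since $Q\triangleleft N_G(Q)$, the only $N_G(Q)$-conjugate of $Q$ is $Q$ itself, so if $\Br_Q$ did not vanish on a summand of $M'$ its vertex would contain $Q$, forcing $Q\subseteq Q^g$ and hence $Q=Q^g$, contradicting $g\notin N_G(Q)$. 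With that correction the identification $\Br_Q(M)\simeq f(M)$, and hence the whole bijection, goes through.
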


Note that this parametrization is compatible with the decomposition of the group algebras into blocks.
\\In [Bro90], Brou{\'e} introduces the notion of \textit{perfect} character between $\mathcal{O}Ge$ and $\mathcal{O}Hf$ as follows.
\begin{definition}A perfect character is an element $\mu\in K_{0}(KGe,KHf)$ satisfying the following:
\begin{itemize}
\item $\forall g\in G$, $\forall h\in H$, {\large $\frac{\mu(g,h)}{|C_{G}(g)|}$ }$\in \mathcal{O}$ and {\large $\frac{\mu(g,h)}{|C_{H}(h)|}$} $\in \mathcal{O}$.
\item if $\mu(g,h)\neq 0$, then $g$ has order prime to $p$ if and only if $h$ has order prime to $p$.
\end{itemize}
\end{definition}

To $\mu\in K_{0}(KGe,KHf)$, we associate isomorphisms $I_{\mu}:K_{0}(KHf)\rightarrow K_{0}(KGe)$ and $R_{\mu}:K_{0}(KFe)\rightarrow K_{0}(KHf)$ and we say that $I_{\mu}$ is a \textit{perfect} isometry if $\mu$ is a perfect character. If the above is considered as the standard definition of a perfect isometry, we would like to notice that one might prefer (as we do) the equivalent definition of [Bro90, Proposition 4.1]. Kindly, we will set the mind of the anxious reader at rest by pretending that one can think of an isotypy as a ``nicely'' compatible family of perfect isometries.\\
By $T(RG)$, we denote the representation ring of $p$-permutations $RG$-module. Also, we will write $T(RG,RH)$ for $T(RG\otimes_{R}RH^{\text{opp}})$. The isomorphism classes $[M]$ of indecomposable $p$-permutation $RG$-modules form a $\mathbb{Z}$-basis of $T(RG)$. For $Q$ a $p$-subgroup of $G$ we denote by $T^{Q}(RG)$ the subgroup of $T(RG)$ generated by relatively $Q$-projective $p$-permutation modules. For convenience, if $\gamma \in T(RG)$, we will put $\gamma(Q)=\Br_{Q}(\gamma)$. To $\gamma \in T(\mathcal{O}G)$, we associate its character $\mu(\gamma) \in K_{0}(KG)$.\\
The tensor product $-\otimes_{RH} -$ induces a $\mathbb{Z}$-bilinear map 
$$
T(RG,RH)\times T(RH,RL)\rightarrow T(RG,RL),\ (\gamma,\delta)\mapsto \gamma \stackrel[H]{}{\cdot}  \delta,
$$
for any third group $L$. Also, taking the $R$-dual induces an isomorphism 
$$
T(RG,RH)\rightarrow T(RH,RG),\ \gamma \mapsto \gamma^{*}.
$$
We define two types of equivalences between $A$ and $B$, two symmetric $R$-algebras: the so-called \textit{Rickard equivalences} and the \textit{stable equivalences}. We say that an $(A,B)$-bimodule $M$ is \textit{exact} if it is projective as a left $A$-module and as a right $B$-module.

\begin{definition}
A bounded complex $X$ of exact $(A,B)$-bimodules induces a Rickard equivalence if \begin{itemize}
\item$X\otimes_{B}X^{*}\simeq A\oplus Z_{1}$ as complexes of $(A,A)$-bimodules
\item$X^{*}\otimes_{A}X\simeq B\oplus Z_{2}$ as complexes of $(B,B)$-bimodules, \end{itemize}
 where $A$ and $B$ are concentrated in degree $0$, and $Z_{1}$ and $Z_{2}$ are homotopy equivalent to $0$.
\end{definition}

A Rickard complex $X$ then induces a derived equivalence $X\otimes_{B}-:\mathcal{D}^{b}(B\text{-mod}) \stackrel{\sim}{\rightarrow} \mathcal{D}^{b}(A\text{-mod})$.

\begin{definition}
A bounded complex $X$ of exact $(A,B)$-bimodules induces a stable equivalence if \begin{itemize}
\item$X\otimes_{B}X^{*}\simeq A\oplus Z'_{1}$ as complexes of $(A,A)$-bimodules
\item$X^{*}\otimes_{A}X\simeq B\oplus Z'_{2}$ as complexes of $(B,B)$-bimodules, \end{itemize}
 where $A$ and $B$ are concentrated in degree $0$, and $Z'_{1}$ and $Z'_{2}$ are homotopy equivalent to complexes of projectives bimodules.
\end{definition}

Let $RA$ and $RB$ be block algebras of $RG$ and $RH$ respectively. There is a specific type of Rickard complexes between block algebras, called \textit{splendid complexes} introduced by Rickard [Ric96].

\begin{definition}
A complex $X\in \Co({RA}\text{-mod-}{RB})$ is \textit{splendid} if its terms (viewed as $R[G\times H^{\text{opp}}]$-modules) are direct summands of finite direct sums of modules of the form $\Ind_{\Delta Q}^{G\times H^{\text{opp}}}(R)$ for $Q\leq P$ and $X$ realizes a Rickard equivalence between $\mathcal{D}^{b}(RA)$ and $\mathcal{D}^{b}(RB)$.
\end{definition}

It is shown in [Ric96] that a splendid equivalence induces an isotypy at the level of the Grothendieck group. Hence, this might lead us to believe that the derived equivalence predicted by Brou{\'e}, between the derived category of a block with abelian defect and its Brauer correspondent, should be splendid.\\
There is another specific type of derived equivalences, introduced by Chuang and Rouquier (cf. [ChRo]), called \textit{perverse equivalences}. They can be seen as filtered derived equivalences, \textit{i.e.} as a patching of Morita equivalences on each stratum of the filtration.\\
Let $\mathcal{S}$ (resp. $\mathcal{S}'$) be the set of isomorphism classes of simple objects of $RA$ (resp. $RB$).
Consider \begin{itemize}
\item a filtration $\mathcal{S}_{\bullet}=(\emptyset=\mathcal{S}_{-1}\subset \mathcal{S}_{0} \subset \ldots \subset \mathcal{S}_{r}=\mathcal{S})$
\item a filtration $\mathcal{S}'_{\bullet}=(\emptyset=\mathcal{S}'_{-1}\subset \mathcal{S}'_{0} \subset \ldots \subset \mathcal{S}'_{r}=\mathcal{S}')$
\item and a function $p:\{0,\cdots,r \}\rightarrow \mathbb{Z}$.
\end{itemize}

\begin{definition}
An equivalence $F:\mathcal{D}^{b}(RA)\stackrel{\sim}{\rightarrow} \mathcal{D}^{b}(RB)$ is perverse relative to $(\mathcal{S}_{\bullet},\mathcal{S}'_{\bullet},p)$ if the following holds:
\begin{itemize}
\item given $V\in \mathcal{S}_{i}-\mathcal{S}_{i-1}$, then the composition factors of $H^{r}(F(V))$ are in $\mathcal{S}'_{i-1}$ for $r\neq -p(i)$ and there is a filtration $L_{1}\subset L_{2}\subset H^{-p(i)}(F(V))$ such that the composition factors of $L_{1}$ and of $H^{-p(i)}(F(V))/L_{2}$ are in $\mathcal{S}'_{i-1}$ and $L_{2}/L_{1} \in \mathcal{S}'_{i}-\mathcal{S}'_{i-1}$.
\item The map $V\mapsto L_{2}/L_{1}$ induces a bijection $\mathcal{S}_{i}-\mathcal{S}_{i-1}\stackrel{\sim}{\rightarrow} \mathcal{S}'_{i}-\mathcal{S}'_{i-1}$.
\end{itemize}
\end{definition}

Note that if $F$ is a perverse equivalence with $p=0$, then $F$ restricts to a Morita equivalence $RA\text{-mod}\stackrel{\sim}{\rightarrow} RB\text{-mod}$.\\
In the context of $\mathfrak{sl}_{2}$-categorification defined by the previous two authors in [ChRo08], the Rickard complex $\Theta$ that gives a self-derived equivalence $\Theta:\mathcal{D}^{b}(\mathcal{A}) \stackrel{\sim}{\rightarrow} \mathcal{D}^{b}(\mathcal{A})$ for $\mathcal{A}$ a $\mathfrak{sl}_{2}$-categorification is perverse. Another famous example of a perverse equivalence is the complex of Rickard and Cabanes ([CaRi01]) which gives the Alvis-Curtis duality at the level of Grothendieck groups. Note also that the splendid complex $X$ of the Section $3$ of [Ric96] between the principal block of $kA_{5}$ and $kA_{4}$ gives a perverse derived equivalence. This is a prototype of the so-called \textit{elementary} perverse equivalences.\\
In Section 5, we will define another type of perverse equivalence, at the level of the homotopy category of additive categories.

\section{The classical dynamic of global and local}
Within this paradigm of connecting global and local properties, a first natural question is: what can be said homologically with no binds of perversity? More precisely, if $X$ is a complex of relatively $\Delta P$-projective $p$-permutations $(kG, kH)$-bimodules which locally induces derived equivalences, \text{i.e.} at the level of centraliser of $p$-element, does $X$ realize a derived equivalence between $kG$ and $kH$? The answer is not exactly. Indeed, thanks to a result of Bouc-Rouquier [Rou01, Theorem 5.6] one can only hope for a global stable equivalence:

\begin{proposition}([Rou01, Theorem 5.6])
Let $X\in \Co({kGe}\text{-mod-}{kHf})$ be a complex of relatively $\Delta P$-projective $p$-permutations $(kG, kH)$-bimodules. The following assertions are equivalent:
\begin{enumerate}
\item $X$ induces a stable equivalence between $kGe$ and $kHf$. 
\item For every non-trivial subgroup $Q\leq P$, the complex $\Br_{\Delta Q}(X)$ induces a Rickard equivalence between $kC_{G}(Q)e_{Q}$ and $kC_{H}(Q)f_{Q}$.
\item For every subgroup $Q$ of order $p$ in $P$, the complex $\Br_{\Delta Q}(X)$ induces a Rickard equivalence between $kC_{G}(Q)e_{Q}$ and $kC_{H}(Q)f_{Q}$.
\end{enumerate}
\end{proposition}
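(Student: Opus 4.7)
The implication $(2)\Rightarrow(3)$ is immediate since subgroups of order $p$ are non-trivial. The plan is to establish $(1)\Rightarrow(2)$ by transporting the stable-equivalence identities through the Brauer functor, and $(3)\Rightarrow(1)$ through a cone/vanishing-of-Brauer argument; this closes the loop.

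For $(1)\Rightarrow(2)$, start from the isomorphisms $X\otimes_{kH}X^{*}\simeq kGe\oplus Z'_{1}$ and $X^{*}\otimes_{kG}X\simeq kHf\oplus Z'_{2}$, where the $Z'_{i}$ are homotopy equivalent to complexes of projective bimodules, and apply $\Br_{\Delta Q}$ for $Q\leq P$ non-trivial. Using the key properties recalled in the reminders,
$$\Br_{\Delta Q}(M\otimes_{kH}N)\simeq \Br_{\Delta Q}(M)\otimes_{kC_{H}(Q)}\Br_{\Delta Q}(N),\qquad \Br_{\Delta Q}(kGe)\simeq kC_{G}(Q)e_{Q},$$
together with the commutation of the $k$-dual and the Brauer functor (so that $\Br_{\Delta Q}(X^{*})\simeq \Br_{\Delta Q}(X)^{*}$), one reads off exactly the defining identities for $\Br_{\Delta Q}(X)$ to be a Rickard complex, modulo showing that $\Br_{\Delta Q}(Z'_{i})\simeq 0$. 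This last vanishing follows from the Brou{\'e}-Puig parametrization: an indecomposable projective $(kGe,kGe)$-bimodule is a summand of $kGe\otimes_{k}kGe=\Ind^{G\times G^{\opp}}_{1}(k)$, hence has trivial vertex, and its Brauer construction at $\Delta Q$ vanishes for $Q\neq 1$.

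For $(3)\Rightarrow(1)$, let $\eta: kGe\to X\otimes_{kH}X^{*}$ be the unit of the adjunction induced by $X \dashv X^{*}$, and let $Y$ be its cone in $\Co(kGe\text{-mod-}kGe)$; the terms of $Y$ are again relatively $\Delta P$-projective $p$-permutation bimodules. For $R$ cyclic of order $p$ in $P$, hypothesis $(3)$ makes $\Br_{\Delta R}(X)$ into a Rickard complex, so by naturality $\Br_{\Delta R}(\eta)$ becomes (up to homotopy) the unit of a Rickard equivalence, which is an isomorphism in the homotopy category. Hence $\Br_{\Delta R}(Y)\simeq 0$ for every such $R$. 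The next step---which I expect to be the main obstacle---is the structural lemma asserting that a bounded complex of $p$-permutation bimodules whose Brauer constructions vanish at all subgroups $\Delta R$ with $|R|=p$ is homotopy equivalent to a complex of projective bimodules. One proves it by passing to a minimal homotopy representative, so that each term is a direct sum of indecomposable $p$-permutation modules; the Brou{\'e}-Puig parametrization then forces every such summand to have trivial vertex, since any non-trivial $p$-subgroup contains a subgroup of order $p$ on which the Brauer construction would be non-zero. Applied to $Y$, this yields a homotopy equivalence between $Y$ and a complex of projective $(kGe, kGe)$-bimodules, and the distinguished triangle $kGe\to X\otimes_{kH}X^{*}\to Y\to kGe[1]$ splits; the symmetric computation on $X^{*}\otimes_{kG}X$ completes the proof.
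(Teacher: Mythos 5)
The paper does not prove this statement itself; it quotes it directly from [Rou01, Theorem 5.6] and uses it as a black box, so there is no internal proof to compare against. Judged on its own merits, your $(2)\Rightarrow(3)$ and $(1)\Rightarrow(2)$ are correct: for the latter, the $Z'_{i}$ have $p$-permutation terms (being summands of $X\otimes_{kH}X^{*}$ resp.\ $X^{*}\otimes_{kG}X$), the Brauer functor is additive and so respects the homotopy equivalence $Z'_{i}\simeq P_{i}$ with $P_{i}$ projective, and projective $(kGe,kGe)$-bimodules are summands of $\Ind_{1}^{G\times G^{\text{opp}}}(k)$ and hence vanish under $\Br_{\Delta Q}$ for $Q\neq 1$.

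The genuine gap is in $(3)\Rightarrow(1)$, at the ``structural lemma'' you rightly flag as the main obstacle, and there are two separate problems with the sketch. First, knowing that $\Br_{\Delta R}(Y_{\min})$ is \emph{homotopy equivalent} to zero does not let you conclude that its terms are zero: a contractible complex of nonzero projective modules is perfectly possible, so the chain ``a summand of $Y_{\min}$ has vertex containing $\Delta R$, hence $\Br_{\Delta R}(Y_{\min})$ has a nonzero term, contradiction'' does not close. What is actually required is a compatibility between the Brauer construction and minimal complexes: if $Y_{\min}$ has no contractible summand and $Q$ is maximal among vertices of its terms, then $\Br_{\Delta Q}(Y_{\min})$ also has no contractible summand. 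This needs, beyond the object-level Brou{\'e}-Puig parametrization, a morphism-level statement (a map between indecomposable $p$-permutation modules of the same vertex $Q$ is an isomorphism iff its image under $\Br_{Q}$ is), and this is the real technical content of the corresponding lemmas in [Rou01, Section 5]. Second, the maximal vertex $\Delta Q$ of a term of $Y_{\min}$ may well have $|Q|>p$, while your hypothesis only gives $\Br_{\Delta R}(Y)\simeq 0$ for $|R|=p$; you must first promote $(3)$ to $(2)$ by an induction on $|Q|$ using iterated Brauer constructions, so that $\Br_{\Delta Q}(Y)\simeq 0$ for \emph{all} nontrivial $Q$, before the structural lemma can be applied at the maximal vertex. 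Once both points are supplied --- and neither is automatic --- the cone argument you outline does deliver that $Y$ is homotopy equivalent to a complex of projective bimodules, and the proof closes as you describe.
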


Boltje and Xu introduced an intermediate notion, that lies between a splendid equivalence and an isotypy, the so-called $p$-permutation equivalence (cf. [BoXu08]). Indeed, they showed that, not only a splendid equivalence induces a $p$-permutation equivalence but also that a $p$-permutation equivalence induces an isotypy. 

\begin{definition} A $p$-permutation equivalence between $\mathcal{O}Ge$ and $\mathcal{O}Hf$ is an element $\gamma \in T^{\Delta P}(\mathcal{O}Ge,\mathcal{O}Hf)$ satisfying
$$\gamma \stackrel[H]{}{\cdot} \gamma^{*}=[\mathcal{O}Ge] \in T(\mathcal{O}Ge,\mathcal{O}Ge)$$
and
$$\gamma^{*} \stackrel[G]{}{\cdot} \gamma=[\mathcal{O}Hf] \in T(\mathcal{O}Hf,\mathcal{O}Hf).$$
\end{definition}

\begin{lemma}
Let $\gamma \in T^{\Delta P}(\mathcal{O}Ge,\mathcal{O}Hf)$ with $\mu(\gamma)$ an isometry. Then $\gamma$ is a $p$-permutation equivalence between $\mathcal{O}Ge$ and $\mathcal{O}Hf$ if and only if for every $p$-subgroup $Q\neq 1$, $\gamma(\Delta Q)$ is a $p$-permutation equivalence between $\mathcal{O}C_{G}(Q)e_{Q}$ and $\mathcal{O}C_{H}(Q)f_{Q}$.
\end{lemma}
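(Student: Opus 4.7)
The plan is to exploit the compatibility of the Brauer construction with tensor product and with $\mathcal{O}$-duality, combined with Brou\'e--Puig's parametrization, to translate between the global identities $\gamma \stackrel[H]{}{\cdot} \gamma^{*} = [\mathcal{O}Ge]$ and $\gamma^{*} \stackrel[G]{}{\cdot} \gamma = [\mathcal{O}Hf]$ and their local analogues after applying $\Br_{\Delta Q}$.

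For the direct implication, I would simply apply $\Br_{\Delta Q}$ to both sides of the first identity. The isomorphism
$$
M(\Delta Q) \otimes_{\mathcal{O}C_{H}(Q)} N(\Delta Q) \stackrel{\sim}{\rightarrow} (M \otimes_{\mathcal{O}H} N)(\Delta Q)
$$
recalled in the preliminaries, together with the identifications $\mathcal{O}Ge(\Delta Q) \simeq \mathcal{O}C_{G}(Q) e_{Q}$ and the compatibility of the Brauer construction with the $\mathcal{O}$-dual (which gives $\gamma^{*}(\Delta Q) = \gamma(\Delta Q)^{*}$), immediately yields
$$
\gamma(\Delta Q) \stackrel[C_{H}(Q)]{}{\cdot} \gamma(\Delta Q)^{*} = [\mathcal{O}C_{G}(Q) e_{Q}],
$$
and symmetrically for the second identity. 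The isometry hypothesis on $\mu(\gamma)$ plays no role in this direction.

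For the converse, introduce the defect $\delta := \gamma \stackrel[H]{}{\cdot} \gamma^{*} - [\mathcal{O}Ge] \in T^{\Delta P}(\mathcal{O}Ge,\mathcal{O}Ge)$ and aim for $\delta = 0$. The local hypothesis, via the same Brauer/tensor compatibility, gives $\Br_{\Delta Q}(\delta) = 0$ for every $p$-subgroup $Q \neq 1$. Since $\delta$ lies in $T^{\Delta P}$, the vertex of any indecomposable summand is $(G \times G)$-conjugate to some $\Delta Q$ with $Q \leq P$ (subgroups of $\Delta P$ being of this form); Brou\'e--Puig's parametrization characterizes this vertex as the maximal $p$-subgroup not annihilated by $\Br$, so each such summand must in fact have trivial vertex. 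Consequently $\delta$ lies in the subgroup of $T(\mathcal{O}Ge,\mathcal{O}Ge)$ generated by classes of projective $\mathcal{O}[G \times G]$-bimodules.

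The main obstacle is then to kill this residual projective contribution, and this is precisely where the isometry hypothesis enters. Since the character map $\mu$ is additive and multiplicative with respect to $\stackrel[H]{}{\cdot}$ and $(-)^{*}$, the assumption that $\mu(\gamma)$ is an isometry of $K_{0}$-groups translates into $\mu(\gamma) \cdot \mu(\gamma)^{*} = \mu([\mathcal{O}Ge])$, so that $\mu(\delta) = 0$. As the characters of projective $\mathcal{O}[G\times G]$-modules over $K$ are $\mathbb{Z}$-linearly independent (a projective module being determined up to isomorphism by its character), we conclude $\delta = 0$. Running the same argument on $\gamma^{*} \stackrel[G]{}{\cdot} \gamma - [\mathcal{O}Hf]$ produces the second defining identity of a $p$-permutation equivalence and completes the proof.
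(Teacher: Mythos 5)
Your proof is correct and mirrors the paper's argument step for step: both directions rest on the compatibility of the Brauer construction with $\otimes$ and duality, and in the converse both reduce the defect $\delta=\gamma\stackrel[H]{}{\cdot}\gamma^{*}-[\mathcal{O}Ge]$ to a virtual projective via Brou\'e--Puig, then kill it using the isometry hypothesis together with the fact that ordinary characters of projectives are linearly independent (the paper phrases this as injectivity of the Cartan homomorphism). One phrase worth tightening: ``each such summand must in fact have trivial vertex'' does not follow directly from the vertex characterization applied to the \emph{virtual} object $\delta$ (there could be cancellation between $M$ and $N$ in $\delta=[M]-[N]$); it requires a descending induction on the order of the vertex, using Brou\'e--Puig plus Krull--Remak--Schmidt exactly as the paper does to obtain $[M]+[L']=[N]+[L]$ with $L,L'$ projective.
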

\begin{proof}
One direction is straightforward, as we have $\Br_{\Delta Q}(\gamma \stackrel[H]{}{\cdot} \gamma^{*})\simeq \Br_{\Delta Q}(\gamma) \stackrel[C_{H}(Q)]{}{\cdot} \Br_{\Delta Q}(\gamma^{*})$. We proceed to prove the other direction, from local to global. We write $\tilde{\gamma}:=\gamma \stackrel[H]{}{\cdot} \gamma^{*}-[\mathcal{O}Ge] =[M]-[N]$ where $M$ and $N$ are $p$-permutation $\mathcal{O}Ge$-bimodules and we prove that if for every $p$-subgroup $Q\neq 1$, $\tilde{\gamma}(\Delta Q)=0$ then $\tilde{\gamma}=0$.
We have $[M(\Delta Q)]=[N(\Delta Q)]$ for every $Q$. By Brou{\'e}-Puig's parametrization of permutation modules (cf. [Bro85, Theorem 3.2]) and the Krull-Remak-Schmidt theorem, we have $[M] + [L']=[N] + [L]$, where $L$ and $L'$ are projective $\mathcal{O}Ge$-bimodules. So that $\gamma \stackrel[H]{}{\cdot} \gamma^{*}-[\mathcal{O}Ge] =[L]-[L']$. But if we take the associated character over $K$, and as $\mu \stackrel[H]{}{\cdot} \mu^{*}=[[KGe]]$, we have $[[L]]=[[L']]$. However by injectivity of the Cartan homomorphism (cf. [Ser78, Chapter 16]) $c:K_{0}(kG\text{-proj})\hookrightarrow K_{0}(kG)$, we conclude that $[L]=[L']$ and hence $\gamma \stackrel[H]{}{\cdot} \gamma^{*}=[\mathcal{O}Ge] \in T(\mathcal{O}Ge,\mathcal{O}Ge)$.\end{proof}

The next stronger result relies on the same technique and gives us a partial converse to Boltje and Xu's theorem [BoXu08, Theorem 1.11].

\begin{proposition}
Let $\gamma \in T^{\Delta P}(\mathcal{O}Ge,\mathcal{O}Hf)$. If $(\mu(\gamma({\Delta Q}))_{Q\leq P})$ is an isotypy, then $\gamma$ is a $p$-permutation equivalence.
\end{proposition}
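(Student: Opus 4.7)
The plan is to proceed by descending induction on $|Q|$, where $Q$ ranges over the subgroups of $P$, and to establish
$$
\gamma(\Delta Q)\stackrel[C_{H}(Q)]{}{\cdot}\gamma(\Delta Q)^{*}=[\mathcal{O}C_{G}(Q)e_{Q}]
$$
together with the symmetric identity. The case $Q=1$ then recovers the proposition. Writing $\tilde{\gamma}_{Q}:=\gamma(\Delta Q)\stackrel[C_{H}(Q)]{}{\cdot}\gamma(\Delta Q)^{*}-[\mathcal{O}C_{G}(Q)e_{Q}]$, the tensor/Brauer compatibility recalled in Section~0.1 (together with the iterated Brauer identity $\Br_{\Delta R}\circ\Br_{\Delta Q}\simeq\Br_{\Delta(QR)}$ valid here because $P$ is abelian) gives the key identity $\tilde{\gamma}_{Q}(\Delta R)\simeq\tilde{\gamma}_{QR}$ for every $R\leq P$, which will drive the induction.

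For the base case $Q=P$, I would exploit that $P$ is abelian and Sylow, hence central in both $C_{G}(P)$ and $C_{H}(P)$. Schur--Zassenhaus yields $C_{G}(P)\cong P\times K_{G}$ and $C_{H}(P)\cong P\times K_{H}$ with $K_{G}, K_{H}$ of order prime to $p$, so that the principal blocks $\mathcal{O}C_{G}(P)e_{P}$ and $\mathcal{O}C_{H}(P)f_{P}$ are both isomorphic to $\mathcal{O}P$. The regular bimodule $\mathcal{O}P\simeq\Ind_{\Delta P}^{P\times P^{\mathrm{opp}}}(\mathcal{O})$ is (because $\mathcal{O}P$ is local) the unique indecomposable $\Delta P$-projective $p$-permutation $(\mathcal{O}P,\mathcal{O}P)$-bimodule, so $T^{\Delta P}(\mathcal{O}P,\mathcal{O}P)=\mathbb{Z}[\mathcal{O}P]$. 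Writing $\gamma(\Delta P)=n\,[\mathcal{O}P]$, the perfect isometry $\mu(\gamma(\Delta P))$ provided by the isotypy becomes multiplication by $n$ on $K_{0}(KP)$, which is an isometry iff $n=\pm 1$; then $\tilde{\gamma}_{P}=(n^{2}-1)[\mathcal{O}P]=0$.

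For the inductive step, assume $\tilde{\gamma}_{Q'}=0$ for every $Q<Q'\leq P$. The key identity yields $\tilde{\gamma}_{Q}(\Delta R)=\tilde{\gamma}_{QR}=0$ for every non-trivial $R\leq P$ with $R\not\leq Q$. Mimicking the argument of the preceding lemma, I would write $\tilde{\gamma}_{Q}=[M]-[N]$ and apply Brou\'e--Puig's vertex decomposition with Krull--Remak--Schmidt: the above vanishings kill every indecomposable summand whose vertex is $\Delta S$ for $S\not\leq Q$, by a secondary induction on $|S|$ starting from $|S|=|P|$. Because $\gamma(\Delta Q)$ has $\Delta Q$ acting trivially, every surviving summand has vertex $\Delta S$ with $S\supseteq Q$, so combined with $S\leq Q$ only vertex $\Delta Q$ survives. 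The isotypy now gives $\mu(\tilde{\gamma}_{Q})=0$, and the residual virtual bimodule, being controlled via Brou\'e--Puig by the projectives of $k\bar{N}_{C_{G}(Q)\times C_{H}(Q)^{\mathrm{opp}}}(\Delta Q)$, is killed by the injectivity of the Cartan homomorphism applied locally. The main obstacle is exactly this inductive step: the delicate part is the bookkeeping of vertices and the verification that the Cartan argument of the preceding lemma transfers cleanly to the local bimodule algebra after the vertex localisation; by contrast, the base case $Q=P$ collapses neatly to a scalar perfect isometry on $KP$.
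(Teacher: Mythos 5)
Your overall strategy is the same as the paper's: decreasing induction on $|Q|$, using the compatibility of the Brauer construction with tensor products to relate $\tilde{\gamma}_{Q}$ at different levels, then Brou\'e--Puig parametrisation to reduce the residual virtual bimodule to projectives, and finally injectivity of the Cartan homomorphism together with the vanishing of the character given by the perfect isometry. Your inductive step is a somewhat more fleshed-out version of the paper's terse ``mimick the proof of the previous lemma,'' and the key identity $\tilde{\gamma}_{Q}(\Delta R)\simeq\tilde{\gamma}_{QR}$ is correct and is indeed what drives both arguments.

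The genuine error is in your base case, in the sentence asserting that $\mathcal{O}P\simeq\Ind_{\Delta P}^{P\times P^{\mathrm{opp}}}(\mathcal{O})$ is \emph{the unique} indecomposable $\Delta P$-projective $p$-permutation $(\mathcal{O}P,\mathcal{O}P)$-bimodule, so that $T^{\Delta P}(\mathcal{O}P,\mathcal{O}P)=\mathbb{Z}[\mathcal{O}P]$. This is false. Since $P\times P^{\mathrm{opp}}$ is an abelian $p$-group, every permutation module $\mathcal{O}[(P\times P^{\mathrm{opp}})/R]$ with $R\leq\Delta P$ is an indecomposable $p$-permutation bimodule of vertex $R$, and these are pairwise non-isomorphic; so $T^{\Delta P}(\mathcal{O}P,\mathcal{O}P)$ has rank equal to the number of subgroups of $P$, not $1$. (Already for $P$ cyclic of order $p$ you get two generators, $\mathcal{O}P$ and $\mathcal{O}[P\times P^{\mathrm{opp}}]$.) The locality of $\mathcal{O}P$ only controls projective modules, not $p$-permutation modules of arbitrary vertex $\leq\Delta P$. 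What you actually need, and what the paper uses, is not a statement about $T^{\Delta P}$ but the fact that $\Br_{\Delta P}$ kills every summand of vertex strictly smaller than $\Delta P$, so that by Brou\'e--Puig $\gamma(\Delta P)$ is a virtual \emph{projective} $k[N_{G\times H^{\mathrm{opp}}}(\Delta P)/\Delta P]$-module; then the perfect isometry gives $\mu(\tilde{\gamma}(\Delta P))=0$ and Cartan injectivity finishes. This makes the base case uniform with the inductive step and avoids the Schur--Zassenhaus detour; in your version the eventual conclusion $\gamma(\Delta P)=n[\mathcal{O}P]$ is correct, but the reason you give for it is not, and the correct reason is precisely the virtual-projectivity argument the paper applies directly.

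One further point to be made explicit if you keep your inductive step: when you argue that the surviving indecomposable summands of $\tilde{\gamma}_{Q}$ have vertex $\Delta S$ with $S\supseteq Q$ because $\Delta Q$ acts trivially, you are using the fact that for a central $p$-subgroup acting trivially on an indecomposable module, the vertex must contain that subgroup; that is fine, but you also tacitly assume all relevant vertices are diagonal (of the form $\Delta S$), which needs the standard Mackey/fusion argument for $\gamma\stackrel[H]{}{\cdot}\gamma^{*}$ with $\gamma$ $\Delta P$-projective and $P$ a common abelian Sylow. The paper also elides this, but since you are expanding the argument you should either cite it or justify it.
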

\begin{proof}
In fact, we do not actually need to have an isotypy but only a family of perfect isometries. We prove that if for every $p$-subgroup $Q$, $\mu(\Delta Q)=0$ then $\gamma(\Delta Q)=0$, the rest will follow according to the proof of the previous lemma. We proceed by decreasing induction as follows. First for the Sylow $P$, $\tilde{\gamma}(\Delta P)$ is constituted of $k[N_{G\times H^{\text{opp}}}(\Delta P)/\Delta P]$-projective module and so as $\mu_{\tilde{\gamma}(\Delta P)}=0$ we use again the injectivity of the Cartan homomorphism to conclude that $\tilde{\gamma}(\Delta P)=0$. Now suppose we have proved the above property for any $Q<R$ and let us now prove it for $R$. We mimick the proof of the previous lemma to $\gamma(R)$ as $\gamma(\Delta R)(\Delta Q)=\gamma(\Delta Q)=0$ for every $Q\triangleleft R$. Hence, we can finally conclude that $\gamma$ is indeed a $p$-permutation equivalence.
\end{proof}

\begin{remark}
According to our proof, it is enough to require the local property only for every subgroup $Q<P$ of order $p$. This way, our result has a similar flavor as Proposition 1.1.  
\end{remark}

\section{A commutative diagram}
Here our goal is to understand splendid equivalences locally, \textit{i.e.} at the level of centralizers of $p$-elements, following the fundamental article of Rickard [Ric96]. If a splendid complex give rises to local derived equivalences, our desire as algebraists to realize it in a commutative diagram cannot then be assuaged. Indeed, the Brauer functor cannot be defined on derived categories, as it is neither left or right exact. This suggests that the right framework of study might be the homotopy category of $p$-permutations modules.

\begin{lemma}
For $X$ a $\Delta P$-projective $k[G\times H^{\text{opp}}]$-module and $Y$ a $kH$-module, we have that $\Res^{G\times H^{\text{opp}}\times H}_{G\times \Delta H^{\text{opp}}}(X\otimes_{k}Y)$ is $\Delta P$-projective, where $\Delta P$ is canonically embedded into $G\times \Delta H^{\text{opp}}$.
\end{lemma}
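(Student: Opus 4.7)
The plan is to reduce to the universal case $X=\Ind_{\Delta P}^{G\times H^{\text{opp}}}(W)$ and then to rewrite the tensor product with $Y$ as a single induced module from $\Delta P$ via a tensor identity. By definition of $\Delta P$-projectivity, $X$ is a direct summand of $\Ind_{\Delta P}^{G\times H^{\text{opp}}}(W)$ for $W=\Res_{\Delta P}^{G\times H^{\text{opp}}}(X)$. Since $-\otimes_{k}Y$ and $\Res^{G\times H^{\text{opp}}\times H}_{G\times \Delta H^{\text{opp}}}$ are both additive and hence preserve direct summands, it is enough to treat the case $X=\Ind_{\Delta P}^{G\times H^{\text{opp}}}(W)$.

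The core step is then to construct a natural isomorphism of $k[G\times \Delta H^{\text{opp}}]$-modules
\begin{equation*}
\Res^{G\times H^{\text{opp}}\times H}_{G\times \Delta H^{\text{opp}}}\bigl(\Ind_{\Delta P}^{G\times H^{\text{opp}}}(W)\otimes_{k}Y\bigr) \;\cong\; \Ind_{\Delta P}^{G\times \Delta H^{\text{opp}}}\bigl(W\otimes_{k}\Res^{H}_{P}(Y)\bigr),
\end{equation*}
where on the right $\Delta P$ is embedded in $G\times \Delta H^{\text{opp}}$ as $\{(p,(p^{-1},p)) : p\in P\}$ and acts on $\Res^{H}_{P}(Y)$ through the canonical projection $\Delta P\to P\subseteq H$. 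Explicitly, I would define the map by $((g,h)\otimes w)\otimes y\mapsto (g,(h,h^{-1}))\otimes (w\otimes hy)$, with inverse $(g,(h,h^{-1}))\otimes (w\otimes y)\mapsto ((g,h)\otimes w)\otimes h^{-1}y$; this is the projection formula in an equivariant setting, and the required checks (that the forward map is balanced over the $\Delta P$-tensor relation on the left, and that it commutes with the $G\times \Delta H^{\text{opp}}$-action) are direct computations from the definition of the diagonal right action on $X\otimes_{k}Y$.

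Granted the displayed isomorphism, the right-hand side is manifestly $\Delta P$-projective in $G\times \Delta H^{\text{opp}}$, and combining with the reduction above yields the lemma. The only mildly delicate step -- essentially the sole place where care is needed -- is the tensor identity itself, specifically keeping careful track of how $\Delta P$ sits inside both $G\times H^{\text{opp}}$ and $G\times \Delta H^{\text{opp}}$, and of the corresponding $k\Delta P$-module structure on $\Res^{H}_{P}(Y)$. No deeper new idea is needed beyond this bookkeeping.
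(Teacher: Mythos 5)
Your proof is correct and follows essentially the same route as the paper's: both reduce to the universal case $X=\Ind_{\Delta P}^{G\times H^{\text{opp}}}(W)$ and then identify $\Res_{G\times\Delta H^{\text{opp}}}(\Ind_{\Delta P}^{G\times H^{\text{opp}}}(W)\otimes_k Y)$ with a single induction from $\Delta P$. The only difference is cosmetic: the paper gets the displayed isomorphism by rewriting the tensor product as $\Ind_{\Delta P\times H}^{G\times H^{\text{opp}}\times H}(W\otimes_k Y)$, applying Mackey's formula, and observing that $\Delta P\times H\backslash(G\times H^{\text{opp}}\times H)/(G\times\Delta H^{\text{opp}})$ is a single coset with intersection $(G\times\Delta H^{\text{opp}})\cap(\Delta P\times H)\cong\Delta P$; your proposal writes that same isomorphism out by hand as a projection-formula-type map instead of invoking Mackey by name.
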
	

\begin{proof}	
By assumption, $X\vert \Ind_{\Delta P}^{G\times H^{\text{opp}}}X'$ for $X'$ a $k\Delta P$-module and hence we can write $$\Res_{G\times \Delta H^{\text{opp}}}(\Ind_{\Delta P}^{G\times H^{\text{opp}}}X'\otimes_{k} Y)=\Res_{G\times \Delta H^{\text{opp}}}(\Ind_{\Delta P\times H^{\text{opp}}}^{G\times H^{\text{opp}}\times H}(X'\otimes_{k} Y)).$$ By Mackey's theorem, the latter is equal to:
$$\bigoplus_{x\in \Delta P\times H\backslash (G\times H^{\text{opp}}\times H)/G\times \Delta H^{\text{opp}}}\Ind_{(G\times \Delta H^{\text{opp}})\cap (\Delta P \times H^{\text{opp}})^{x}}^{G\times \Delta H^{\text{opp}}} x^{*}(X'\otimes_{k} Y).$$
However, $\Delta P\times H\backslash (G\times H^{\text{opp}}\times H)/G\times \Delta H^{\text{opp}}=\{1\}$ and so there is only one term in the previous direct sum.
\\Hence as $(G\times H)$-module, $\Res_{G\times \Delta H^{\text{opp}}}(X\otimes_{k} Y)$ is $\Delta P$-projective.
\end{proof}
We can now state the following result which is taken from [Ric96].

\begin{proposition}[{\`a} la Rickard]
Let $X$ be a complex whose terms are relatively $\Delta P$-projective p-permutation $kG$-$kH$-bimodules, then for every subgroup $Q\leq P$, we have a commutative diagram:
$$\xymatrix{\Ho(kH\per)\ar[r]^{X\otimes_{kH}-} \ar[d]^{\Br_{Q}}  & \Ho(kG\per)\ar[d]^{\Br_{Q}} \\ \Ho(kC_{H}(Q)\per) \ar[r]^{X_{Q}\otimes_{kC_{H}(Q)}-} & \Ho(kC_{G}(Q)\per)} $$
\end{proposition}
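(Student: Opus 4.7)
The plan is to reduce commutativity of the square in the homotopy categories to a natural isomorphism of $kC_G(Q)$-modules in each degree, since every functor appearing is additive and therefore extends termwise to bounded homotopy categories. Both $\Br_Q$ and $\Br_{\Delta Q}$ are additive on $p$-permutation modules, while $X\otimes_{kH}-$ and $X_Q\otimes_{kC_H(Q)}-$ (writing $X_Q := \Br_{\Delta Q}(X)$) are additive since their defining data are bimodule complexes. The square thus commutes provided that, for each bimodule term $X_i$ of $X$ and each $p$-permutation $kH$-module $Y$, one produces a $kC_G(Q)$-linear isomorphism
\[
\alpha_{X_i,Y}:\Br_{\Delta Q}(X_i)\otimes_{kC_H(Q)}\Br_Q(Y)\stackrel{\sim}{\longrightarrow}\Br_Q(X_i\otimes_{kH}Y),
\]
natural in $X_i$ and $Y$, so that the stalk-wise isomorphisms assemble into an isomorphism of complexes.

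The map $\alpha_{X_i,Y}$ is supplied by the Brauer multiplicativity recalled in the preliminaries, $(M\otimes_{k\Gamma}N)(Q)\simeq M(Q)\otimes_{kC_\Gamma(Q)}N(Q)$, adapted to the bimodule setting. Concretely: for $x\in X_i^{\Delta Q}$ and $y\in Y^Q$, one checks that $x\otimes y\in X_i\otimes_{kH}Y$ is left-$Q$-invariant, because $\Delta Q$-invariance of $x$ means $qx=xq$ for every $q\in Q$, and so $q(x\otimes y)=qx\otimes y=xq\otimes y=x\otimes qy=x\otimes y$. The resulting pairing $X_i^{\Delta Q}\times Y^Q\to (X_i\otimes_{kH}Y)^Q$ is $kC_H(Q)$-balanced and carries relative traces from proper subgroups on either factor to relative traces from proper subgroups of $Q$ in $X_i\otimes_{kH}Y$, and therefore descends to the claimed map between the Brauer quotients.

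The main obstacle is to prove that $\alpha_{X_i,Y}$ is an isomorphism, and here the $\Delta P$-projectivity of the terms of $X$ is indispensable. By Brou\'e--Puig's parametrization and Krull--Remak--Schmidt one reduces to the case where $X_i = \Ind_{\Delta Q'}^{G\times H^{\text{opp}}}(k)$ for some $Q'\leq P$; Lemma 2.1 then ensures that $X_i\otimes_k Y$ is itself $\Delta P$-projective as a $k[G\times\Delta H^{\text{opp}}]$-module, so that the Brauer construction admits a clean Mackey-style computation on the relevant fixed points and no contribution from larger-vertex summands can appear. Using the description $M(Q)\simeq k[B^Q]$ for a permutation module with stable basis $B$, both sides of $\alpha_{X_i,Y}$ are identified with the same $k$-span indexed by $\Delta Q$-stable cosets in $(G\times H^{\text{opp}})/\Delta Q'$, paired with a basis of $\Br_Q(Y)$, and the map becomes tautologically bijective. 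Naturality in $X_i$ is immediate from the construction, so the stalk-wise isomorphisms assemble into an isomorphism of complexes, yielding commutativity of the square in $\Ho(kC_G(Q)\per)$.
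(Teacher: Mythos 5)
Your proof follows essentially the same route as the paper: proceed termwise, use Lemma 2.1 to secure $\Delta P$-projectivity of $X_i\otimes_k Y$ over $G\times\Delta H^{\mathrm{opp}}$, and establish a natural isomorphism $\Br_Q(X_i\otimes_{kH}Y)\simeq\Br_{\Delta Q}(X_i)\otimes_{kC_H(Q)}\Br_Q(Y)$. The only difference is that the paper simply invokes the tensor-product analogues of Lemmas 4.2 and 4.3 of [Ric96] for that isomorphism, whereas you attempt to reconstruct them; your closing claim that the basis identification is \emph{tautologically} bijective compresses precisely the Mackey-type computation that is the real content of those lemmas, since the balanced tensor products $\otimes_{kC_H(Q)}$ and $\otimes_{kH}$ impose genuine identifications among basis elements that must be matched on the two sides rather than read off from a naive product of bases.
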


Here $X_{Q}:=\Br_{\Delta Q}(X)$.

\begin{proof}
Everything can be carried on termwise: we first apply the previous lemma and we notice that if $M$ and $N$ are $p$-permutations $kG$-modules, then so is $M\otimes_{k}N$. \\Hence for $Y$ a $p$-permutation $kH$-module, we have that the terms of $X\otimes_{k}Y$ are $\Delta P$-projective $p$-permutation $k[G\times H]$-modules. We can then apply an analogous  version of the Lemma 4.2 and Lemma 4.3 of [Ric96] on tensor products rather than on Hom-spaces, in order to find that $\Br_{Q}(X\otimes_{kH}Y)\simeq \Br_{\Delta Q}(X)\otimes_{kC_{H}(Q)}\Br_{Q}(Y)$. 
\end{proof}

However, we should have been more careful here as the Brauer functor is actually defined from $\Ho(kH\per)$ to $\Ho(kN_{H}(Q)\per)$ (or even more precisely $\Ho(k(N_{H}(Q)/Q)\per)$), and so we need to clarify things here. More concretely, we have to be careful when going from the normaliser down to the centraliser. This is the purpose of the following section.

\section{A white lie and another commutative diagram}
We shall start here by setting some notations. Firstly, recall that $e$ and $f$ are principal block idempotents and $P$ is an abelian $p$-Sylow. Since as $P$ is abelian, we have $p\nmid[N_{G}(Q):C_{G}(Q)]$ for any $p$-subgroup $Q$.\\
If $X$ is a splendid tilting complex of $kGe$-$kHf$-bimodules, we denote by $X_{Q}:=\Br_{\Delta Q}(X)$, the corresponding complex of $kN_{G\times H^{\text{opp}}}(\Delta Q)$-module. By restriction to $C_{G}(Q)\times C_{H^{\text{opp}}}(Q)$, $X_{Q}$ still gives a Rickard equivalence.\\
More interestingly, according to a lemma of Marcus ([Ma96]), we can lift this Rickard complex so that $X'_{Q}:=\Ind_{N_{G\times H^{\text{opp}}}(\Delta Q)}^{N_{G}(Q)\times N_{H}(Q)^{\text{opp}}}(X_{Q})$ is also a Rickard equivalence between $kN_{G}(Q)e_{Q}$ and $kN_{H}(Q)f_{Q}$. From now on, we will write $N:=N_{G\times H^{\text{opp}}}(\Delta Q)$.\\
\\In fact, we claim that we can link together the Rickard equivalences for the centralizers with the Rickard equivalences for the normalizers.

\begin{proposition} 
Let $X$ be a splendid tilting complex of $kGe$-$kHf$-bimodules, then for every subgroup $Q\leq P$, we have the commutative diagram:
$$\xymatrix{\Ho(kHf\per)\ar[r]^{X\otimes_{kH}-} \ar[d]^{\Br_{Q}}  & \Ho(kGe\per)\ar[d]^{\Br_{Q}} \\ \Ho(kN_{H}(Q)f_{Q}\per) \ar[r]^{X'_{Q}\otimes_{kN_{H}(Q)}-}\ar[d]^{\Res_{C_{H}(Q)}} & \Ho(kN_{G}(Q)e_{Q}\per)\ar[d]^{\Res_{C_{G}(Q)}} \\ \Ho(kC_{H}(Q)f_{Q}\per)\ar[r]^{\tilde{X}_{Q}\otimes_{kC_{H}(Q)}-} & \Ho(kC_{G}(Q)e_{Q}\per)}$$
where for the sake of simplicity, we set $\tilde{X}_{Q}:=\Res_{C_{G}(Q)\times C_{H}(Q)^{\text{opp}}}(X_{Q})$.
\end{proposition}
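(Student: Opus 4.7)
The diagram splits horizontally into two squares: the top one (with vertical $\Br_Q$ arrows, going from global to normalizer) and the bottom one (with vertical $\Res$ arrows, going from normalizer to centralizer). We treat them independently. For the bottom square, the starting point is the Marcus-induction description $X'_Q=\Ind_N^{N_G(Q)\times N_H(Q)^{\text{opp}}}(X_Q)$, with $N=N_{G\times H^{\text{opp}}}(\Delta Q)$. Applying Mackey's formula to $\Res_{C_G(Q)\times N_H(Q)^{\text{opp}}}^{N_G(Q)\times N_H(Q)^{\text{opp}}}(X'_Q)$, the sum over double cosets $(C_G(Q)\times N_H(Q)^{\text{opp}})\backslash (N_G(Q)\times N_H(Q)^{\text{opp}})/N$ reduces modulo $C_G(Q)\times C_H(Q)^{\text{opp}}$ to a computation in $A\times A^{\text{opp}}$, where $A=N_G(Q)/C_G(Q)=N_H(Q)/C_H(Q)$ as subgroups of $\Aut(Q)$ (by the common $p$-local structure assumption). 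Here the image of $N$ is the subgroup $\{(\alpha,\alpha^{-1}):\alpha\in A\}$, while $C_G(Q)\times N_H(Q)^{\text{opp}}$ maps to $\{1\}\times A$; a short calculation shows that $\{1\}\times A$ acts transitively on the corresponding coset space, so the double coset set is a singleton, and $N\cap (C_G(Q)\times N_H(Q)^{\text{opp}})=C_G(Q)\times C_H(Q)^{\text{opp}}$. Consequently,
$$\Res_{C_G(Q)\times N_H(Q)^{\text{opp}}}(X'_Q)\simeq \Ind_{C_G(Q)\times C_H(Q)^{\text{opp}}}^{C_G(Q)\times N_H(Q)^{\text{opp}}}(\tilde{X}_Q).$$
Tensoring with $Z\in\Ho(kN_H(Q)f_Q\per)$ over $kN_H(Q)$ and collapsing via the identity $(M\otimes_{kC_H(Q)}kN_H(Q))\otimes_{kN_H(Q)}Z\simeq M\otimes_{kC_H(Q)}\Res_{C_H(Q)}Z$ yields the bottom square.

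For the top square, we want $\Br_Q(X\otimes_{kH}Y)\simeq X'_Q\otimes_{kN_H(Q)}\Br_Q(Y)$ as $kN_G(Q)$-modules. Combining Proposition 2.2 with the bottom square just established, both sides restrict to $\tilde{X}_Q\otimes_{kC_H(Q)}\Res_{C_H(Q)}(\Br_Q(Y))$ as $kC_G(Q)$-modules. To upgrade this $C_G(Q)$-equivariant isomorphism to a $N_G(Q)$-equivariant one, we proceed termwise and reduce (via Lemma 3.1 and the $\Delta P$-projectivity hypothesis) to the case where $X$ is a transitive permutation bimodule $\Ind_{\Delta Q'}^{G\times H^{\text{opp}}}(k)$ with $Q'\leq P$ and $Y=\Ind_T^H(k)$ for a $p$-subgroup $T\leq H$. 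Both sides of the claimed isomorphism then become permutation $kN_G(Q)$-modules on explicit $Q$-fixed subsets of the associated $G\times H^{\text{opp}}\times H$-sets, and the natural bijection between these fixed-point sets is $N_G(Q)$-equivariant by construction.

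The main obstacle is precisely this equivariance upgrade in the top square. Proposition 2.2 supplies only the weaker $C_G(Q)$-equivariant statement, which is why the Marcus-lifted bimodule $X'_Q$ rather than the plain restriction $\tilde{X}_Q$ must be used at the normalizer level: $X'_Q$ encodes the correct extension of the centralizer action to the whole of $N_G(Q)\times N_H(Q)^{\text{opp}}$. Once the permutation-module reduction is performed, the fixed-point calculation makes the $N_G(Q)$-action manifest on both sides and matches them on the nose; the essential work is careful bookkeeping of group actions rather than any deep new algebraic input, and the two-stage factorisation of the diagram is precisely what isolates this bookkeeping step cleanly.
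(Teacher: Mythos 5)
Your factorisation of the diagram into bottom and top squares, and your treatment of the bottom square, are the same as the paper's: Mackey's formula (trivial double coset) gives
$\Res_{C_G(Q)\times N_H(Q)^{\text{opp}}}(X'_Q)\simeq\Ind_{C_G(Q)\times C_H(Q)^{\text{opp}}}^{C_G(Q)\times N_H(Q)^{\text{opp}}}(\tilde{X}_Q)$,
and then the induction--restriction adjunction finishes the bottom square; you also correctly identify that the remaining content is upgrading a $C_G(Q)$-equivariant isomorphism to an $N_G(Q)$-equivariant one.

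Where you diverge is in how you propose to carry out that upgrade, and this is where your proposal has a genuine gap. You reduce to transitive permutation (bi)modules and assert that the resulting bijection of $Q$-fixed-point sets ``is $N_G(Q)$-equivariant by construction.'' That sentence is the entire difficulty, not a consequence of the setup. The problem is that the $N_G(Q)$-action on the right-hand side $X'_Q\otimes_{kN_H(Q)}\Br_Q(Y)$ is the one coming from Marcus induction $X'_Q=\Ind_N^{N_G(Q)\times N_H(Q)^{\text{opp}}}(X_Q)$, while the action on the left-hand side $\Br_Q(X\otimes_{kH}Y)$ is the intrinsic one; matching these requires first making sense of an $N_G(Q)$-action on the intermediate object $\tilde X_Q\otimes_{kC_H(Q)}\Res_{C_H(Q)}\Br_Q(Y)$ (equivalently, on $\Hom_{kC_H(Q)}(X_Q,Y_Q)$), which is not at all visible from the fixed-point description. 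This is exactly what the paper's proof addresses: it constructs a subgroup $N'\subseteq N^{\text{opp}}\times N_H(Q)$ sitting in a short exact sequence $1\to\Delta C_H(Q)\to N'\to N_G(Q)\to 1$, uses it to write down the $N_G(Q)$-action explicitly as $g\cdot\beta=\beta(g\cdot h)h^{-1}$, and then checks by hand that the Mackey isomorphism and the adjunction isomorphism commute with this action. Your fixed-point route could in principle be made to work (it is close in spirit to Rickard's own permutation-module computations), but to be a proof it would have to reproduce, in combinatorial form, precisely the bookkeeping that the $N'$ construction packages; as written, the crucial equivariance is assumed rather than established. There is also a secondary issue you should not gloss over: the reduction to transitive permutation modules needs naturality of the isomorphism (to pass through idempotent summands defining $p$-permutation modules), and naturality is again something you must build in from the start rather than observe afterwards.
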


 \begin{proof}
 The bigger square is in fact the right formulation of the previous proposition \textit{{\`a} la Rickard}, we have a canonical isomorphism of functors:
\begin{eqnarray}
\tilde{X}_{Q}\otimes_{kC_{H}(Q)}\Res_{C_{H}(Q)}(\Br_{Q}(-))\stackrel{\sim}{\rightarrow} \Res_{C_{G}(Q)}(\Br_{\Delta Q}(X\otimes_{kH}-))
 \end{eqnarray}
The commutativity of the bottom square can be expressed as $$\Res_{C_{G}(Q)}(\Ind_{N}^{N_{G}(Q)\times N_{H}(Q)^{\text{opp}}}X_{Q}\otimes_{kN_{H}(Q)}-)\stackrel{\sim}{\rightarrow} \tilde{X}_{Q}\otimes_{kC_{H}(Q)}\Res_{C_{H}(Q)}(-)$$
Applying Mackey's formula and as $1\times N_{H}(Q)^{\text{opp}}\backslash N_{G}(Q)\times N_{H}(Q)^{\text{opp}}/N=1$, we get:
\begin{eqnarray}
\Res_{C_{G}(Q)}(\Ind_{N}^{N_{G}(Q)\times N_{H}(Q)^{\text{opp}}}X_{Q})\stackrel{\sim}{\rightarrow}  \Ind_{C_{G}(Q)\times C_{H}(Q)^{\text{opp}}}^{C_{G}(Q)\times N_{H}(Q)^{\text{opp}}}X_{Q} 
\end{eqnarray}
 So that the previous expression now becomes :
\begin{eqnarray}
\Ind_{C_{G}(Q)\times C_{H}(Q)^{\text{opp}}}^{C_{G}(Q)\times N_{H}(Q)^{\text{opp}}}X_{Q}\otimes_{kN_{H}(Q)}- \stackrel{\sim}{\leftarrow}  \tilde{X}_{Q}\otimes_{kC_{H}(Q)}\Res_{C_{H}(Q)}(-)
\end{eqnarray}
Now, this is just expressing the adjunction between induction and restriction as an isomorphism between $k$-vector space. Actually this is also an isomorphism of $kC_{G}(Q)$-modules.  
\\It remains to prove that the upper square is commutative. In order to do so, it is enough to prove that both $(1)$ and $(3)$ are $N_{G}(Q)$-isomorphisms.\\\\
For the rest of this proof, we will work with Hom's spaces rather than tensor product, as we would rather deal with fixed than cofixed point. Of course, thanks to the adjunction between Hom-functor and tensor functor, this does not change anything.\\
First of all, we shall point out that it is in no way easy to see that for $Y\in \Ho(kHf\per),$ $N_{G}(Q)$ acts on $\Hom_{kC_{H}(Q)}(X_{Q},Y_{Q})$. What follows is a particularly nice trick to unveil the action. We denote by $Z:=\Hom_{k}(X_{Q},Y_{Q})$ the $(N^{\text{opp}}\times N_{H}(Q))$-bimodule and we try to provide $Z^{\Delta C_{H}(Q)}=\Hom_{kC_{H}(Q)}(X_{Q},Y_{Q})$ with an action of $N_{G}(Q)$. 
If one can find $N'\subseteq N^{\text{opp}}\times N_{H}(Q)$, such that the following short exact sequence holds: 
$$\xymatrix{1\ar[r] & \Delta C_{H}(Q)\ar[r] & N' \ar[r] & N_{G}(Q) \ar[r]& 1}$$ then we could endow $(\Res_{N'}Z)^{\Delta C_{H}(Q)}$ with a \textit{natural} action of $N_{G}(Q)$. We will proceed in two steps, the first one defines $\tilde{N}$ as follows:
$$\xymatrix{N^{\text{opp}}\times N_{H}(Q)\ar[r] & \Aut(\Delta Q)^{\text{opp}}\times \Aut(Q)\ar@{<->}[d] \\ &\Aut(Q)^{\text{opp}}\times \Aut (Q)\\ \tilde{N}\ar[r] \ar@{^(->}[uu] & \Delta(\Aut(Q))\ar@{^(->}[u]  }$$
We invite the attentive reader to check that $N'=\tilde{N}\cap (N_{G}(Q)\times \Delta N_{H}(Q))$ suits us.
Concretely, the action of $g\in N_{G}(Q)$ on $\Hom_{kC_{H}(Q)}(X_{Q},Y_{Q})$ is defined as the action of $(g,h,h) \in N'$: for $\beta \in \Hom_{kC_{H}(Q)}(X_{Q},Y_{Q})$ and $g\in N_{G}(Q)$, $g.\beta:=\beta(g\cdot h)h^{-1}$. \\
We now turn to see that the adjunction $(3)$ is also a $kN_{G}(Q)$-morphism. The Mackey's isomorphism $(2)$ provides $\Ind_{C_{H}(Q)^{\text{opp}}}^{N_{H}(Q)^{\text{opp}}}X_{Q}$ with an action of $N'$:
\begin{eqnarray*}
 k(N_{G}(Q)\times N_{H}(Q)^{\text{opp}})\otimes_{kN}X_{Q} & \stackrel{\sim}{\longrightarrow}  &X_{Q}\otimes_{kC_{H}(Q)}kN_{H}(Q) \\
(g\otimes h)\otimes x & \stackrel{\phi}{\longmapsto} & \big( (g\otimes l).x \big) \otimes l^{-1}h\\
(1\otimes n)\otimes x & \longmapsfrom & x\otimes n
\end{eqnarray*}
where $l\in N_{H}(Q)$ and $g\in N_{G}(Q)$ induce the same automorphism of $Q$, so that $g\otimes h=\underbrace{(g\otimes l)}_{\in N}(1\otimes l^{-1}h)$.\\
That way we define an action of $N_{G}(Q)\times N_{H}(Q)$ on $\Ind_{C_{H}(Q)^{\text{opp}}}^{N_{H}(Q)^{\text{opp}}}(X_{Q})$. For $(g,h)\in N_{G}(Q)\times N_{H}(Q)$ and $x\otimes n \in \Ind_{C_{H}(Q)^{\text{opp}}}^{N_{H}(Q)^{\text{opp}}}(X_{Q})$, we have $(g,h).(x\otimes n):=\phi((g,h).\phi^{-1}(x\otimes n))=\big((g\otimes l).x\big)\otimes l^{-1}nh$ for $k$ as above.\\
Hence for $g\in N_{G}(Q)$ such that $(g,h,h)\in N'$, $g.(x\otimes n)=\big((g\otimes h).x\big)\otimes h^{-1}nh$.\\
Now recall that we had the adjunction $(3)$: 
\begin{eqnarray*}
\Hom_{kC_{H}(Q)}(X_{Q},\Res_{C_{H}(Q)}^{N_{H}(Q)}(Y_{Q}))& \stackrel{\sim}{\rightarrow} & \Hom_{kN_{H}(Q)}(\Ind_{C_{H}(Q)^{\text{opp}}}^{N_{H}(Q)^{\text{opp}}}(X_{Q}),Y_{Q}))\\
\beta & \stackrel{F}\mapsto & \big(x\otimes n \mapsto \beta(x)n^{-1}\big)
\end{eqnarray*}
For $g\in N_{G}(Q)$, $h\in N_{H}(Q)$ such that $(g,h,h)\in N'$, $F(g.\beta)=\big(x\otimes n\mapsto \beta(gxh)h^{-1}n^{-1}\big))=\big(x\otimes n \mapsto \beta(gxh)(h^{-1}n^{-1}h)h^{-1}\big)=g.F(\beta)$.\\
We conclude, as promised, that $(3)$ is an $N_{G}(Q)$-isomorphism. 
\\Finally, a little diagram chasing leads us to our desired conclusion: the upper square is also commutative.
\end{proof}

\section{Perverse Equivalences and Clifford Theory}
Let $G$ be a finite group and $H$ a normal subgroup of $G$ of index prime to $p$ with $G=H\rtimes L$. Let $\mathcal{S}_{G}$ (resp. $\mathcal{S}_{H}$) be the set of isomorphism classes of simple $kG$-module (resp. $kH$-module). We define an equivalence relation on $\mathcal{S}_{G}$ by $M\sim N$ if $\Hom_{kH}(\Res_{H}(M),\Res_{H}(N))\neq 0$.

\begin{lemma}
Induction and restriction gives a bijection $\mathcal{S}_{H}/L \stackrel{\sim}{\rightarrow} \mathcal{S}_{G}/\sim$.
\end{lemma}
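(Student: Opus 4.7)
The plan is to use Clifford's theorem together with Frobenius reciprocity. The first step is to observe that for any simple $M \in \mathcal{S}_G$, the restriction $\Res_H(M)$ is a semisimple $kH$-module: indeed, the $kH$-socle $\soc(\Res_H M)$ is $G$-stable since $H$ is normal in $G$, hence it generates a nonzero $kG$-submodule of $M$, which must equal $M$ by simplicity. Once semisimplicity is known, Clifford's theorem further implies that the $L$-action on isomorphism classes of simple summands (by conjugation) is transitive, so the set of isomorphism types of simple constituents of $\Res_H(M)$ forms a single $L$-orbit $\omega(M) \in \mathcal{S}_H/L$.

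This semisimplicity immediately gives $\Hom_{kH}(\Res_H(M), \Res_H(N)) \neq 0$ if and only if $\Res_H(M)$ and $\Res_H(N)$ share a common simple summand, which by the previous paragraph is equivalent to $\omega(M) = \omega(N)$. In particular $\sim$ is then visibly an equivalence relation, and $[M] \mapsto \omega(M)$ defines an injective map $\Phi : \mathcal{S}_G/\sim \;\longrightarrow\; \mathcal{S}_H/L$. For surjectivity, given $S \in \mathcal{S}_H$, I pick a simple quotient $M$ of the nonzero $kG$-module $\Ind_H^G(S)$; by Frobenius reciprocity, the nonzero surjection $\Ind_H^G(S) \twoheadrightarrow M$ corresponds to a nonzero $kH$-map $S \to \Res_H(M)$, and since $\Res_H(M)$ is semisimple and $S$ simple, $S$ appears as a summand of $\Res_H(M)$, so $\omega(M) = L\cdot[S]$.

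It remains to check that the inverse $\Psi : L\cdot[S] \mapsto [M]$, with $M$ chosen as above, depends neither on the representative $S$ of its $L$-orbit (since $\Ind_H^G(S^l) \cong \Ind_H^G(S)$ as $kG$-modules, as $l \in L$ normalizes $H$) nor on the choice of simple quotient: by the Mackey formula applied to $H \backslash G / H = L$, $\Res_H \Ind_H^G(S) \cong \bigoplus_{l \in L} S^l$, so every simple constituent $M$ of $\Ind_H^G(S)$ has $\omega(M) = L\cdot[S]$, placing all such $M$ in a single $\sim$-class. The only point requiring genuine care in positive characteristic is the semisimplicity of $\Res_H(M)$; this is exactly what Clifford's socle argument delivers, and after that everything is formal from Mackey and Frobenius. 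Note that the hypothesis that $[G:H]$ is prime to $p$ plays no role here and is only needed for later applications.
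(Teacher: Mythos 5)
Your proof is correct and complete. The paper states this lemma without proof (it is a standard consequence of Clifford's theorem), so there is no in-text argument to compare against; your route --- semisimplicity of $\Res_H(M)$ via Clifford's socle argument, transitivity of the conjugation action of $G/H\cong L$ on the simple constituents, Frobenius reciprocity for surjectivity, and Mackey's formula for the well-definedness of the inverse --- is precisely the standard one. Your observation that coprimality of $[G:H]$ and $p$ plays no role here is also right: that hypothesis is needed in the surrounding Clifford-theoretic statements (for instance Marcus' lemma and Proposition 4.2), not for this bijection of quotient sets. One small cosmetic point: once you know that $G$ permutes the simple $kH$-submodules of $M$ (which your normality argument gives), $\soc(\Res_H M)$ is itself already a $kG$-submodule and hence equals $M$ by simplicity; saying it ``generates'' one is a harmless but superfluous indirection.
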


Consider $\mathcal{S}_{\bullet}:=(\emptyset=\mathcal{S}_{0}\subset S_{1}\subset ...\subset \mathcal{S}_{r}=\mathcal{S}_{G})$ a filtration  of $\mathcal{S}_{G}$ and a perversity function $p:\{1,...,r\}\rightarrow \mathbb{Z}$. This perversity datum $(p,\mathcal{S}_{\bullet})$ is said to be \textit{H-compatible} if it is compatible with $\sim$.\\
We adapt to our situation a result of [CrRo10]:

\begin{proposition}
Let $X$ be a complex of $kN_{G\times H^{\text{opp}}}(\Delta Q)$-module and let $(p,\mathcal{S}_{C_{G}(Q),\bullet},\mathcal{S}_{C_{H}(Q),\bullet})$ be a $H$-invariant perversity datum with corresponding $H$-compatible datum $(p',\mathcal{S}_{N_{G}(Q),\bullet},\mathcal{S}_{N_{H}(Q),\bullet})$. 
\\Then $\Res_{C_{G}(Q)\times C_{H}(Q)^{\text{opp}}}(X)$ induces a perverse equivalence relative to the datum $(p,\mathcal{S}_{C_{G}(Q),\bullet},\mathcal{S}_{C_{H}(Q),\bullet})$ if and only if $\Ind^{N_{G}(Q)\times N_{H}(Q)^{\text{opp}}}(X)$ induces a perverse equivalence relative to $(p',\mathcal{S}_{N_{G}(Q),\bullet},\mathcal{S}_{N_{H}(Q),\bullet})$.
\end{proposition}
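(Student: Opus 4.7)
The argument is a direct adaptation of the Clifford-theoretic machinery of [CrRo10] to our two-sided (bimodule) context. Its backbone is Lemma 4.1 applied to $C_{G}(Q) \triangleleft N_{G}(Q)$ and $C_{H}(Q) \triangleleft N_{H}(Q)$: since $P$ is abelian, both indices $[N_{G}(Q):C_{G}(Q)]$ and $[N_{H}(Q):C_{H}(Q)]$ are coprime to $p$, so the simple $kN_{G}(Q)$-modules (resp.\ $kN_{H}(Q)$-modules) are parametrised by $L$-orbits of simple $kC_{G}(Q)$-modules (resp.\ $kC_{H}(Q)$-modules). The $H$-compatibility hypothesis on the perversity data is precisely what guarantees that, under these bijections, the two filtrations $\mathcal{S}_{C_{G}(Q),\bullet}$ and $\mathcal{S}_{N_{G}(Q),\bullet}$ (and their counterparts on the $H$-side) match stratum by stratum.

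The technical heart of the argument is the observation that both $\Res_{C_{H}(Q)}^{N_{H}(Q)}$ and $\Ind_{C_{H}(Q)}^{N_{H}(Q)}$ are exact (the latter because the index is coprime to $p$, so $kN_{H}(Q)$ is projective as a one-sided $kC_{H}(Q)$-module), hence both commute with cohomology of bounded complexes. Combined with the projection formula, this yields, for a simple module $V$, a natural identification relating the cohomology of $\Res(X)\otimes_{kC_{H}(Q)} V$ with that of $X\otimes_{kN_{H}(Q)}\Ind(V)$, and dually. Together with the Clifford description of the composition factors of $\Ind(V)$ and $\Res(V)$ as a single $L$-orbit of simples, this lets one translate the cohomological perversity conditions back and forth.

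Concretely, to verify that $\Res(X)$ is perverse assuming $\Ind(X)$ is, I would pick a simple $V \in \mathcal{S}_{C_{G}(Q),i} - \mathcal{S}_{C_{G}(Q),i-1}$, lift it via Lemma 4.1 to a representative simple in $\mathcal{S}_{N_{G}(Q),i} - \mathcal{S}_{N_{G}(Q),i-1}$, apply the normalizer-level perversity condition to obtain the filtration $L_{1}\subset L_{2}\subset H^{-p'(i)}(\Ind(X)\otimes -)$, and then restrict the entire picture to $C_{H}(Q)$. The $H$-compatibility of the data, together with the fact that restrictions and inductions of simples are semisimple and lie in a single $L$-orbit, ensures that $\Res(L_{1})$, $\Res(L_{2}/L_{1})$ and the top quotient land in the prescribed centralizer-level strata, and that the bijection condition on the top stratum follows from the Clifford bijection. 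The converse direction is symmetric, using induction rather than restriction.

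The main obstacle I expect is the bookkeeping of the two-step filtration $L_{1}\subset L_{2}$ under $\Ind$ and $\Res$: one must confirm that restricting (or inducing) the $i$-th stratum piece does not pick up any stray composition factor from a different stratum, and that the bijection $V\mapsto L_{2}/L_{1}$ is compatible with the Clifford correspondence. Both points reduce, in the end, to exactness of $\Ind$ and $\Res$ together with the fact that the strata are by hypothesis unions of Clifford orbits, which is exactly the content of the $H$-compatibility assumption.
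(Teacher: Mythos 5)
Your proposal follows essentially the same route as the paper's (very terse) proof, which simply invokes Marcus' lemma for the equivalence part, Lemma~4.1 for the perversity, and defers the details to [CrRo10]. You correctly identify all the right ingredients: the Clifford bijection of Lemma~4.1, the coprimality of $[N_{G}(Q):C_{G}(Q)]$ and $[N_{H}(Q):C_{H}(Q)]$ to $p$, exactness of $\Ind$ and $\Res$, and the role of the $H$-compatibility hypothesis in making the strata correspond.

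One genuine omission: your sketch jumps straight to translating the perversity conditions, but a perverse equivalence is, by definition, first an \emph{equivalence}. You never address why $\Res_{C_{G}(Q)\times C_{H}(Q)^{\text{opp}}}(X)$ induces a derived equivalence if and only if $\Ind^{N_{G}(Q)\times N_{H}(Q)^{\text{opp}}}(X)$ does; this is precisely what Marcus' lemma ([Ma96]) gives (as the paper states), and it is not automatic from the exactness of $\Ind$ and $\Res$ alone. Also, a small notational slip: since $X$ is a $kN_{G\times H^{\text{opp}}}(\Delta Q)$-module, the expression $X\otimes_{kN_{H}(Q)}\Ind(V)$ is not literally well formed; what one compares is $\Ind^{N_{G}(Q)\times N_{H}(Q)^{\text{opp}}}(X)\otimes_{kN_{H}(Q)}V$ against $\Res^{N_{G}(Q)}_{C_{G}(Q)}\bigl(\Ind(X)\otimes_{kN_{H}(Q)}V\bigr) \simeq \tilde{X}_{Q}\otimes_{kC_{H}(Q)}\Res^{N_{H}(Q)}_{C_{H}(Q)}(V)$, which is the content of the Mackey computation already carried out in Proposition~3.1. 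With these two points supplied, your argument matches the intended proof.
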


\begin{proof}
The equivalence part is given by Marcus' lemma. For the perversity, we have to use the previous lemma. We refer to [CrRo10] for a more detailed proof.
\end{proof}

\section{About splendid perverse equivalences}
Splendid equivalence gives rise to derived equivalences at the level of centralizers of $p$-elements. It then seems natural to wonder if a splendid perverse equivalence would also give perverse equivalences locally. Under some mild assumptions, the answer is positive.\\
Firstly, we recall the definition of a perverse equivalence for the homotopy category [ChRo].\\
Let $\mathcal{C}, \mathcal{C}'$ be additive categories satisfying the Krull-Schmidt property. We endow them with a structure of exact category via the split exact sequences. Given $C\in \Co(\mathcal{C})$, we denote by $C_{min}\in \Co(\mathcal{C})$ the complex, unique up to isomorphism such that $C\simeq C_{min}$ in $\Ho(\mathcal{C})$ and has no non-zero direct summand that is homotopy equivalent to $0$.\\
Let $I$ be the set of isomorphism classes of indecomposable objects of $\mathcal{C}$. We have a bijective correspondence $\mathcal{I}\mapsto [\mathcal{I}]$ from Serre subcategories of $\mathcal{C}$ to subsets of $I$. Finally, we denote by $\mathcal{I}_{\bullet}$ (resp. $\mathcal{I}'_{\bullet}$) a filtration of $\mathcal{C}$ (resp. $\mathcal{C}'$) of length $r$ by Serre subcategories and consider $p:\{1,...,r\}\rightarrow \mathbb{Z}$.

\begin{definition}
An equivalence $F:\Ho(\mathcal{C})\stackrel{\sim}{\rightarrow} \Ho(\mathcal{C}')$ is perverse relative to $(\mathcal{C}_{\bullet},\mathcal{C'}_{\bullet},p)$ if and only if
\begin{itemize}
\item for $M\in [\mathcal{I}_{i}]-[\mathcal{I}_{i-1}]$, we have $(F(M)_{min})^{r}\in \mathcal{I}'_{i-1}$ for $r\neq -p(i)$ and $(F(M)_{min})^{-p(i)}=M'\oplus L$ for some $M'\in [\mathcal{I}'_{i}]-[\mathcal{I}'_{i-1}]$ and $L\in \mathcal{I}'_{i-1}$.
\item The map $M\mapsto M'$ gives a bijection $[\mathcal{I}_{i}]-[\mathcal{I}_{i-1}]  \stackrel{\sim}{\rightarrow}  [\mathcal{I}'_{i}]-[\mathcal{I}'_{i-1}]$.
\end{itemize}
\end{definition}

The following lemma (cf. [ChRo]) establishes the connection between perverse equivalence for the homotopy category of projective $kG$-modules and perverse equivalence for the derived category of $kG\text{-mod}$. Consider $\mathcal{S}$ and $\mathcal{S}'$ filtrations of $kG\text{-mod}$ and $kH\text{-mod}$ of length $r$. We denote by $\mathcal{P}_{i}$ the additive full subcategory of $kG\text{-proj}$ generated by the projective covers of $V$, $V\in \mathcal{S}-\mathcal{S}_{r-i}$. We define $\bar{p}$ by $\bar{p}(i)=p(r-i+1)$.
\begin{lemma}
Consider an equivalence $F:\mathcal{D}^{b}(kG) \stackrel{\sim}{\rightarrow} \mathcal{D}^{b}(kH)$ that restricts to an equivalence $\bar{F} : \Ho(kG\text{-proj}) \stackrel{\sim}{\rightarrow} \Ho(kH\text{-proj})$.\\
The equivalence $F$ is perverse relative to $(\mathcal{S}_{\bullet},\mathcal{S}'_{\bullet},p)$ if and only if $
\bar{F}$ is perverse relative to $(\mathcal{P}_{\bullet},\mathcal{P}'_{\bullet},\bar{p})$.
\end{lemma}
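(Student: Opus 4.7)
The plan is to bridge the two notions of perversity via a single numerical identity relating projective-summand multiplicities in minimal complexes to cohomology multiplicities, and then to read off both directions of the lemma by a case analysis based on the perversity data. The key identity I would first establish is: for any minimal bounded complex $C$ of projective $kH$-modules and any simple $kH$-module $W'$ with projective cover $P_{W'}$,
\begin{equation*}
[C^{s} : P_{W'}] \;=\; \dim \Hom_{\mathcal{D}^{b}(kH)}(C, W'[-s]).
\end{equation*}
A chain map $C \to W'[-s]$ is determined by a single component $f^{s} : C^{s} \to W'$; the compatibility $f^{s} \circ d^{s-1}_{C} = 0$ is automatic because any map into a simple kills the radical, which contains $\Im d^{s-1}_{C}$; and minimality kills null-homotopies, since a null-homotopy $h^{s+1} : C^{s+1} \to W'$ would force $f^{s} = h^{s+1} \circ d^{s}_{C}$, but $h^{s+1}$ vanishes on $\rad C^{s+1} \supseteq \Im d^{s}_{C}$. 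Hence $\Hom_{\Ho(kH\text{-proj})}(C, W'[-s]) \simeq \Hom_{kH}(C^{s}/\rad C^{s}, W')$, of dimension $[C^{s} : P_{W'}]$.

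Applied to $C = \bar{F}(P_{V})_{min}$, which coincides with $F(P_{V})$ in $\mathcal{D}^{b}(kH)$, and combined with the adjunction coming from $F$ together with the exactness of $\Hom_{kG}(P_{V}, -)$, I obtain
\begin{equation*}
[(\bar{F}(P_{V})_{min})^{s} : P_{W'}] \;=\; \dim \Hom_{\mathcal{D}^{b}(kG)}(P_{V}, F^{-1}(W')[-s]) \;=\; [H^{-s}(F^{-1}(W')) : V].
\end{equation*}
For the forward direction, I assume $F$ is perverse and observe that $F^{-1}$ is then perverse relative to the reversed data $(\mathcal{S}'_{\bullet}, \mathcal{S}_{\bullet}, -p)$. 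Fixing $V \in \mathcal{S}_{i} - \mathcal{S}_{i-1}$ and letting $W' \in \mathcal{S}'_{j} - \mathcal{S}'_{j-1}$ vary, the perversity of $F^{-1}$ says that composition factors of $H^{-s}(F^{-1}(W'))$ lie in $\mathcal{S}_{j-1}$ away from degree $p(j)$, and in degree $p(j)$ there is one new constituent in $\mathcal{S}_{j} - \mathcal{S}_{j-1}$, namely the inverse image of $W'$ under the perverse bijection. Since $V \in \mathcal{S}_{j-1}$ iff $j \geq i+1$, and $V$ is the distinguished simple iff $j = i$ and $W' = V'$ (the partner of $V$ under the bijection), the multiplicity $[H^{-s}(F^{-1}(W')):V]$ vanishes except when either $W' \notin \mathcal{S}'_{i}$ (any $s$, i.e.\ $P_{W'} \in [\mathcal{P}'_{r-i}]$) or $W' = V'$ and $s = -p(i) = -\bar{p}(r-i+1)$ (where it equals one). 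This is precisely the homotopy perversity condition on $\bar{F}(P_{V})_{min}$.

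The converse direction is symmetric: one applies the identity with the roles of $F$ and $F^{-1}$ exchanged to get $[(\bar{F}^{-1}(P_{W'})_{min})^{s} : P_{V}] = [H^{-s}(F(V)) : W']$; using the fact that the homotopy perversity of $\bar{F}$ yields that of $\bar{F}^{-1}$ with reversed data, one reads off the cohomological perversity conditions on $F(V)$ directly. The main obstacle is not any single step but the bookkeeping needed to match the second bullet of both definitions: the numerical identity above encodes total multiplicities of summands in $(\bar{F}(P_{V})_{min})^{-p(i)}$ and of composition factors in $H^{-p(i)}(F(V))$, and isolating the single distinguished summand $P_{V'}$ (respectively the distinguished subquotient $V'$ sitting in the middle of the filtration $L_{1} \subset L_{2}$) from the lower-stratum contributions requires invoking the bijectivity clause on simples and the fact that a minimal complex has no contractible direct summands. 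One must also track the index shift $\bar{p}(r-i+1) = p(i)$ consistently between the two sides.
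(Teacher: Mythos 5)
The paper gives no proof of this lemma: it simply attributes the statement to [ChRo]. So there is nothing to compare against; I will assess your argument on its own.

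Your central identity, that for a minimal bounded complex $C$ of projective $kH$-modules and a simple $W'$ one has $[C^{s}:P_{W'}] = \dim\Hom_{\mathcal{D}^{b}(kH)}(C,W'[-s])$, is correct: minimality forces the differentials into the radical, which both kills the cocycle condition and the null-homotopies, and the inclusion $\Ho(kH\text{-}\proj)\hookrightarrow\mathcal{D}^{b}(kH)$ is fully faithful. Combined with exactness of $\Hom_{kG}(P_{V},-)$ and the adjunction, the resulting equality $[(\bar{F}(P_{V})_{min})^{s}:P_{W'}] = [H^{-s}(F^{-1}(W')):V]$ is the right bridge. Your treatment of the forward direction (reading off the homotopy perversity from the perversity of $F^{-1}$ relative to $(\mathcal{S}'_{\bullet},\mathcal{S}_{\bullet},-p)$, and the index translation $\bar{p}(r-i+1)=p(i)$) is correct, granted the standard fact that the inverse of a perverse derived equivalence is perverse with the opposite perversity function.

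The converse direction is where a real step is being elided. From the homotopy perversity of $\bar{F}$ (or, as you set it up, of $\bar{F}^{-1}$) the identity yields only numerical constraints on the multiplicities $[H^{-s}(F(V)):W']$: all composition factors of $H^{-s}(F(V))$ lie in $\mathcal{S}'_{i-1}$ for $s\neq -p(i)$, and $H^{-p(i)}(F(V))$ has composition factors in $\mathcal{S}'_{i-1}$ together with exactly one copy of the partner $V'$. But the derived definition asks for more than multiplicities: it asks for an explicit two-step filtration $L_{1}\subset L_{2}\subset H^{-p(i)}(F(V))$ with $L_{1}$ and the top quotient in the Serre closure of $\mathcal{S}'_{i-1}$ and $L_{2}/L_{1}\simeq V'$. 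Your diagnosis of this as being resolved by ``the bijectivity clause on simples and the fact that a minimal complex has no contractible direct summands'' does not identify the tool actually needed. What one should invoke is the elementary module-theoretic fact that any finite-length module $M$ has a unique maximal submodule $L_{1}$ all of whose composition factors lie in a fixed Serre subcategory $\mathcal{T}$, and that $\soc(M/L_{1})$ then has no factors in $\mathcal{T}$; applied with $\mathcal{T}$ the Serre closure of $\mathcal{S}'_{i-1}$, the multiplicity constraint forces $\soc(M/L_{1})\simeq V'$, and one sets $L_{2}/L_{1}=\soc(M/L_{1})$ and checks $M/L_{2}$ has factors in $\mathcal{T}$. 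You should also state, or cite, the analogue for homotopy perversity of the ``inverse of perverse is perverse with $-p$'' fact if you want to run the converse through $\bar{F}^{-1}$ as you propose; alternatively, one can argue directly on $F^{-1}$ using the original identity and then invert at the end, which uses only the derived-category version of that fact. With these two points filled in, the argument is complete.
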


\subsection{From global to local perversities}
We now have all the tools we need to answer our original question. We consider $X$ a splendid Rickard complex of $kGe$-$kHf$-bimodules that restricts to a perverse equivalence between the homotopy category of $p$-permutation modules. This is an important assumption as this is stronger than asking for a perverse equivalence between derived categories. \\We denote by $\mathcal{I}_{\bullet}$ a filtration of $I$, the set of isomorphism classes of indecomposable $p$-permutation $kHf$-modules. For $Q$ a $p$-subgroup of $H$, we denote by $\mathcal{I}_{Q\bullet}$ the corresponding filtration consisting only of permutation modules of vertex $Q$. Then $\Br_{ Q}(\mathcal{I}_{Q\bullet})$ gives us a filtration on $kN_{H}(Q)f_{Q}\text{-proj}$. Let us see if the induced equivalence $\Ho(kN_{H}(Q)f_{Q}\text{-proj})  \stackrel{\sim}{\rightarrow} \Ho(kN_{G}(Q)e_{Q}\text{-proj}) $ is perverse. For that, we consider the following commutative diagram.
$$\xymatrix{\Ho(kHf\per)\ar[r]^{X\otimes_{kH}} \ar[d]^{\Br_{ Q}}  & \Ho(kGe\per)\ar[d]^{\Br_{ Q}} \\ \Ho(kN_{H}(Q)f_{Q}\per) \ar[r] & \Ho(kN_{G}(Q)e_{Q}\per) \\ \Ho(kN_{H}(Q)f_{Q}\text{-proj})\ar[r]\ar@{^(->}[u] & \Ho(kN_{G}(Q)e_{Q}\text{-proj})\ar@{^(->}[u]}$$
Collecting all of the above, we have the following statements.

\begin{lemma}
For $M$ an indecomposable $p$-permutation $kHf$-module of vertex $Q$, the terms of $X\otimes_{kH}M$ have, up to conjucacy, vertices smaller than $Q$.  
\end{lemma}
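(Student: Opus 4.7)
The plan is to combine the splendid description of $X$ with the trivial source description of $M$, and to run a direct Mackey computation over the common Sylow subgroup $P$; no perversity enters this preliminary vertex estimate.

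First I would use splendidness to reduce to a single diagonal bimodule: every term of $X$ is a direct summand of some $\Ind_{\Delta R}^{G\times H^{\text{opp}}}(k)$ with $R\leq P$, so it suffices to bound vertices of indecomposable summands of $\Ind_{\Delta R}^{G\times H^{\text{opp}}}(k)\otimes_{kH}M$. Next, I would identify this diagonal bimodule via the canonical isomorphism $\Ind_{\Delta R}^{G\times H^{\text{opp}}}(k)\simeq kG\otimes_{kR}kH$ of $(kG,kH)$-bimodules, which collapses the tensor product to
$$
\Ind_{\Delta R}^{G\times H^{\text{opp}}}(k)\otimes_{kH}M \;\simeq\; \Ind_{R}^{G}\bigl(\Res_{R}^{H}M\bigr).
$$

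Then, since $M$ has vertex $Q$, it is a direct summand of $\Ind_{Q}^{H}(k)$, and Mackey gives
$$
\Res_{R}^{H}\Ind_{Q}^{H}(k)\;\simeq\;\bigoplus_{x\in R\backslash H/Q}\Ind_{R\cap {}^{x}Q}^{R}(k).
$$
Applying $\Ind_{R}^{G}$ and using transitivity of induction shows that every indecomposable summand of $\Ind_{R}^{G}(\Res_{R}^{H}M)$ has vertex contained, up to $G$-conjugacy, in some $R\cap {}^{x}Q\leq {}^{x}Q$. Because the standing assumption $N_{G}(P)\leq H$ ensures that $G$ and $H$ share the same $p$-local structure on $P$, the $H$-conjugate ${}^{x}Q$ is $G$-conjugate to $Q$, and so every vertex is a subgroup of $Q$ up to conjugacy, as claimed.

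The only delicate point is the bimodule identification in the first step — matching the diagonal embedding $\Delta R\hookrightarrow G\times H^{\text{opp}}$ (with the paper's convention $\Delta R=\{(r,r^{-1}):r\in R\}$) against the standard $(kG,kH)$-bimodule $kG\otimes_{kR}kH$ — but this is a routine verification of the bimodule action and not a genuine obstacle; the remainder is standard Mackey bookkeeping.
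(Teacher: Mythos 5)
Your proof is correct, and it takes a genuinely different route from the paper's. The paper argues through the Brauer-functor commutative diagram: $\Br_Q(M)$ is projective over $kN_H(Q)f_Q$, so $\Br_Q(X\otimes_{kH}M)\simeq X'_Q\otimes\Br_Q(M)$ lands in $\Ho(kN_G(Q)e_Q\text{-}\proj)$, and one then reads off vertex information from this. You instead bypass the diagram entirely and reduce everything to a Mackey computation on the common Sylow: the identification $\Ind_{\Delta R}^{G\times H^{\opp}}(k)\otimes_{kH}M\simeq \Ind_R^G\Res_R^H M$ followed by $M\mid\Ind_Q^H(k)$ and the Mackey formula shows each indecomposable summand is induced from some $R\cap{}^xQ$, hence has vertex subconjugate to $Q$. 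This is more elementary and self-contained: it uses only splendidness of $X$, not the local Rickard equivalences $X'_Q$ nor the commutativity of the Brauer square, and it directly bounds \emph{all} indecomposable summands of every term, whereas the Brauer-functor route a priori only controls summands whose vertex contains a conjugate of $Q$ (the functor $\Br_Q$ annihilates summands with vertex not containing $Q$ up to conjugacy, so some additional argument is needed there to rule out incomparable vertices). One small remark: the appeal to shared $p$-local structure at the end is unnecessary — the double-coset representative $x$ lies in $H\leq G$, so ${}^xQ$ is already $G$-conjugate to $Q$ without any hypothesis.
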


\begin{proof}
Let $M$ an indecomposable $p$-permutation $kHf$-module of vertex $Q$ with $M\in [\mathcal{I}_{i}]-[\mathcal{I}_{i-1}]$. Then $\Br_{ Q}(M)$ is a $kN_{H}(Q)f_{Q}$-projective indecomposable. Reciprocally, we know that any $kN_{H}(Q)f_{Q}$-projective indecomposable comes us this way. Now a quick diagram chasing gives us the desired result.
\end{proof}

\begin{proposition}
Let $X$ be a splendid $kGe$-$kHf$-complex that restricts to a perverse equivalence $\Ho(kHf\per)  \stackrel{\sim}{\rightarrow} \Ho(kGe\per)$. Then for any $p$-subgroup $Q$ of $P$, $\Br_{\Delta Q}(X)$ induces a perverse equivalence $\mathcal{D}^{b}(kN_{H}(Q)f_{Q})\stackrel{\sim}{\rightarrow} \mathcal{D}^{b}(kN_{G}(Q)e_{Q})$.
\end{proposition}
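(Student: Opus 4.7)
The plan is to verify perversity at the level of homotopy categories of projective modules, and then descend to derived categories via Lemma~5.3. Fix a $p$-subgroup $Q\leq P$. The filtrations $\mathcal{P}_{Q\bullet}$ on $kN_H(Q)f_Q\text{-proj}$ (and similarly $\mathcal{P}'_{Q\bullet}$ on the $G$-side) are to be constructed as $\Br_Q$ applied to the vertex-$Q$ subfiltration $\mathcal{I}_{Q\bullet}$: by Brou\'e--Puig (Proposition~0.1), the Brauer functor restricts to a bijection between indecomposable vertex-$Q$ $p$-permutation $kHf$-modules and indecomposable projective $k\bar N_H(Q)f_Q$-modules, and these identify with the indecomposable projective $kN_H(Q)f_Q$-modules via inflation (using $p\nmid[N_H(Q):C_H(Q)]$, which holds since $P$ is abelian).

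Next, combining the commutative diagram preceding the statement with the preceding lemma, I track how $\Br_{\Delta Q}(X)$ acts on these filtrations. For any $p$-permutation $kHf$-module $M$, there is a canonical isomorphism
\[
\Br_Q(X \otimes_{kH} M) \;\simeq\; \Br_{\Delta Q}(X) \otimes_{kN_H(Q)} \Br_Q(M).
\]
Fix now an indecomposable vertex-$Q$ module $M \in \mathcal{I}_{Q,i}\setminus \mathcal{I}_{Q,i-1}$. The hypothesized global perverse equivalence on $\Ho(kHf\per)$ asserts that $(X\otimes_{kH}M)_{min}$ has each term in $\mathcal{I}_{i-1}$ away from degree $-p(i)$, and in degree $-p(i)$ decomposes as $M' \oplus L$ with $M' \in \mathcal{I}_i \setminus \mathcal{I}_{i-1}$ and $L \in \mathcal{I}_{i-1}$. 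The preceding lemma guarantees that every indecomposable summand of $(X\otimes_{kH}M)_{min}$ has vertex contained in $Q$ up to conjugacy. Applying the additive functor $\Br_Q$ therefore annihilates every summand whose vertex is strictly smaller than $Q$, and sends vertex-$Q$ indecomposables to indecomposable projective $k\bar N_G(Q)e_Q$-modules via Brou\'e--Puig. It follows that $\Br_{\Delta Q}(X) \otimes_{kN_H(Q)} \Br_Q(M)$ has its terms in $\mathcal{P}'_{Q,i-1}$ in every degree $\neq -p(i)$, and in degree $-p(i)$ splits as $\Br_Q(M') \oplus \Br_Q(L)$ with $\Br_Q(M') \in \mathcal{P}'_{Q,i}\setminus\mathcal{P}'_{Q,i-1}$ and $\Br_Q(L) \in \mathcal{P}'_{Q,i-1}$. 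The global bijection $M \mapsto M'$ preserves vertices (applying the preceding lemma to $X$ and to $X^*$ in turn), so it descends under $\Br_Q$ to the required bijection $\mathcal{P}_{Q,i}\setminus \mathcal{P}_{Q,i-1} \to \mathcal{P}'_{Q,i}\setminus \mathcal{P}'_{Q,i-1}$.

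This establishes that $\Br_{\Delta Q}(X) \otimes_{kN_H(Q)} -$ is a perverse equivalence $\Ho(kN_H(Q)f_Q\text{-proj}) \xrightarrow{\sim} \Ho(kN_G(Q)e_Q\text{-proj})$ relative to $(\mathcal{P}_{Q\bullet}, \mathcal{P}'_{Q\bullet}, p)$, and Lemma~5.3 then immediately yields the desired perverse derived equivalence $\mathcal{D}^b(kN_H(Q)f_Q) \xrightarrow{\sim} \mathcal{D}^b(kN_G(Q)e_Q)$ (with the dual perversity $\bar p$). The main obstacle is the interaction of minimality with $\Br_Q$: one must justify that applying $\Br_Q$ to $(X\otimes_{kH}M)_{min}$ yields, up to homotopy, a minimal complex on the projective side, so that the term-by-term accounting of strata remains valid. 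This is precisely what Brou\'e--Puig secures, since distinct vertex-$Q$ indecomposable $p$-permutation modules map to distinct indecomposable projectives; combined with the fact that $\Br_Q$ kills only the strictly-smaller-vertex summands, no accidental contractible summands are introduced and the perversity survives.
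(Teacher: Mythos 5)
Your proposal is correct, and it takes a route that differs from the paper's in one genuinely interesting way: the key fact that the stratum bijection $M\mapsto M'$ preserves vertices. The paper handles this by a counting induction: it first checks the base case that projective (vertex-$1$) modules are sent to perfect complexes, then uses Lemma~5.3 to get $vx(M')\subseteq_G vx(M)$, and finally invokes the equinumerosity of indecomposable $p$-permutation $kHf$- and $kGe$-modules of any given vertex to force equality. You instead apply Lemma~5.3 once to $X$ (giving $vx(M')\subseteq_G vx(M)$) and once to $X^{*}$ together with the fact that the inverse of a perverse homotopy equivalence is again perverse with inverse bijection (giving $vx(M)\subseteq_G vx(M')$), which is arguably cleaner: it avoids the appeal to equinumerosity (which in the paper's version is itself ultimately a consequence of the local Rickard equivalences) and needs no base case. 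Both arguments are valid.

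Two smaller remarks. First, a numbering slip: the lemma you invoke to descend from homotopy-of-projectives perversity to derived perversity is the paper's Lemma~5.2 (the one relating $F$ and $\bar F$); your Lemma~5.3 is the vertex lemma. Second, you are right to flag the subtle point about minimality interacting with $\Br_Q$. Your justification is on the right track; to make it airtight one should note that for an indecomposable vertex-$Q$ module $N$, the map $\Br_Q:\End_{kH}(N)\rightarrow\End_{kN_H(Q)}(\Br_Q(N))$ is a surjection of local rings, hence local, so non-isomorphisms stay non-isomorphisms and no contractible summand is created. The paper elides this point as well (both in the proof of Lemma~5.3 and here), so you have actually spotted and addressed a gap the paper leaves implicit.
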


\begin{proof} We have to show that we have a perverse homotopy equivalence between the corresponding homotopy category of projective modules thanks to Lemma $5.2$. The only thing we need to check is that the bijections $M\mapsto M'$ (as in Definition 5.1) between each stratum, given by the global perverse equivalence, respect the vertex. A projective module is sent by a splendid complex to a perfect complex and hence if $M$ has vertex $0$, so does $M'$. We can now proceed by induction. With the previous lemma, we know that if $M$ has vertex $Q$, then the terms of $X\otimes_{kH}M$ has vertex smaller than $Q$. However for every vertex $Q'<Q$ we have, up to isomorphism, as many $kG$-permutation modules with vertex $Q'$ than $kH$-permutation modules with vertex $Q'$. As the assignment $M\mapsto M'$ is already a bijection, we conclude by induction that $M'$ is also of vertex $Q$.
\end{proof}

According to Brou{\'e}-Puig's parametrization, we have $I  \stackrel{\sim}{\rightarrow} \{(Q,N_{H}(Q$)-simples)$ \}_{Q}$. The induced perversity datum for elements of vertex $Q$ is denoted by $(\mathcal{I}_{Q\bullet},p_{Q})$, where $p_{Q}$ is the restriction of $p$ to $\mathcal{I}_{Q\bullet}$. We say it is \textit{$C_{H}(Q)$-compatible} if for any $S,S'$ simple $kN_{H}(Q)$-modules such that $\Res_{C_{H(Q)}}(S)\simeq\Res_{C_{H(Q)}}(S')$, then $p(S)=p(S')$.

\begin{definition}
If for all $p$-subgroups $Q$, the induced perversity data $(\mathcal{I}_{Q\bullet},p_{Q})$ are $C_{H}(Q)$-compatible, we say that $(\mathcal{I}_{\bullet},p)$ is locally compatible.
\end{definition}

Now, using Proposition $4.2$ and the previous proposition, we can state the following:

\begin{proposition} 
Let $X$ be a splendid $kGe$-$kHf$-complex that restricts to a perverse equivalence $\Ho(kHf\per)  \stackrel{\sim}{\rightarrow} \Ho(kGe\per)$ with locally compatible perversity datum. Then for all $Q$, $X$ induces perverse equivalences $\mathcal{D}^{b}(kC_{H}(Q)f_{Q})\stackrel{\sim}{\rightarrow} \mathcal{D}^{b}(kC_{G}(Q)e_{Q})$. 
\end{proposition}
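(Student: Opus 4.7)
The plan is to assemble two results proved earlier: Proposition 5.3, which upgrades the global perverse homotopy equivalence to a local derived perverse equivalence at the normaliser level, and Proposition 4.2, which allows descent from the normaliser to the centraliser via Clifford theory. The commutative diagram of Proposition 3.2 will then identify the restricted complex $\tilde{X}_Q$ as the functor that realises the equivalence at the centraliser level.

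First, I would fix a $p$-subgroup $Q\le P$ and apply Proposition 5.3 directly. This yields that $\Br_{\Delta Q}(X)$ induces a perverse equivalence
\[
\mathcal{D}^{b}(kN_{H}(Q)f_{Q})\stackrel{\sim}{\rightarrow} \mathcal{D}^{b}(kN_{G}(Q)e_{Q}),
\]
relative to a perversity datum $(\mathcal{S}_{N_{H}(Q),\bullet},\mathcal{S}_{N_{G}(Q),\bullet},p')$ built from the $Q$-slice $(\mathcal{I}_{Q\bullet},p_{Q})$ of the global datum. The translation from $\mathcal{I}_{Q\bullet}$ to a filtration of simple $k\bar{N}_{H}(Q)$-modules is exactly what Brou\'e--Puig's parametrisation (Proposition 0.3) together with Lemma 5.2 provides, and the bijection on strata respects vertices by the argument in the proof of Proposition 5.3.

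Second, I would invoke the local compatibility hypothesis. By Definition 5.4, the datum $(\mathcal{I}_{Q\bullet},p_{Q})$ is $C_{H}(Q)$-compatible, which is exactly the condition that the induced normaliser-level perversity $p'$ is constant on the $\sim$-classes of Section 4, i.e.\ $H$-compatible in the sense of Proposition 4.2 (with $H\leftrightarrow C_{H}(Q)\triangleleft N_{H}(Q)$; the quotient has order prime to $p$ since $P$ is abelian). Proposition 4.2 then converts the perverse equivalence at the normaliser level into a perverse equivalence at the centraliser level, relative to $(\mathcal{S}_{C_{H}(Q),\bullet},\mathcal{S}_{C_{G}(Q),\bullet},p_{Q})$.

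Finally, to see that the functor realising this centraliser-level equivalence is indeed $\tilde{X}_{Q}\otimes_{kC_{H}(Q)}-$, I would appeal to the lower square of the commutative diagram of Proposition 3.2, which identifies $\Res_{C_{G}(Q)\times C_{H}(Q)^{\mathrm{opp}}}$ applied to the induced Rickard complex with $\tilde{X}_{Q}$. Combining this with the outer square, together with Lemma 5.2 to pass between homotopy of $p$-permutation modules and derived categories, gives the claimed perverse equivalence $\mathcal{D}^{b}(kC_{H}(Q)f_{Q})\stackrel{\sim}{\rightarrow}\mathcal{D}^{b}(kC_{G}(Q)e_{Q})$.

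The main technical obstacle is already absorbed into Proposition 5.3 and Lemma 5.2, where one must control vertices under the stratum bijection of the global perverse homotopy equivalence. Once the normaliser statement is granted, the proposition here is genuinely a gluing statement: the only content is verifying that the local compatibility assumption is precisely the hypothesis needed to feed Proposition 4.2, and that the diagrams of Sections 3 and 4 match up the functors. No further homological computation is required.
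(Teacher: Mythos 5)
Your proposal follows the paper's own argument exactly: the paper simply states that the result follows by combining Proposition~4.2 with the preceding proposition (normaliser-level perverse equivalence), with local compatibility supplying the hypothesis needed to descend from $N_H(Q)$ to $C_H(Q)$, and gives no further proof. Your write-up is a faithful and correct expansion of that one-line argument, including the identification of the local compatibility condition of Definition~5.4 with the Clifford-theoretic compatibility required in Section~4 and the use of the commutative diagrams of Section~3 to track the functor down to the centraliser.
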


\subsection{From local to global?}
We would now want a converse to Proposition $5.4$, so we need to see how, from the data of local perverse equivalence, we could obtain a global perverse equivalence.
First, we need to slightly extend the notion of perverse equivalence for a partial order. Let $\mathcal{C},\mathcal{C}'$ additive categories and $I$ (resp. $I'$) the set of indecomposable objects of $\mathcal{C}$ (resp. $\mathcal{C}'$). We consider $\leq$ (resp. $\leq'$) partial order on $I$ (resp. $I'$) such that $I$ and $I'$ are isomorphic as posets through a map $\phi$. A perversity function is then $p:I \rightarrow \mathbb{Z}$. 

\begin{definition}
An equivalence $F:\Ho(\mathcal{C})\stackrel{\sim}{\rightarrow} \Ho(\mathcal{C}')$ is perverse relative to $(\leq,\leq',p)$ if and only if for $M\in I$, we have $(F(M)_{min})^{r}\in \mathcal{I}'_{<' \phi(M)}$ for $r\neq -p(M)$ and $(F(M)_{min})^{-p(M)}=\phi(M)\oplus L$ for $L\in \mathcal{I}'_{<'\phi(M)}$.
\end{definition}

Of course, if the order is total, we find the original definition of perverse equivalence on homotopy categories.\\
Let us go back to our classical settings and consider $\leq$ a partial order on $\{Q,N_{H}(Q)f_{Q}\text{-simples}\}_{Q}$, or equivalently on $I$, the set of isomorphims classes of indecomposable $p$-permutation $kHf$-module. The next lemma tells us that we can refine any partial order in a particularly interesting way, so that it respects the inclusion of vertices.

\begin{lemma}
Consider an equivalence $X\otimes_{kH}-:\Ho(kHf\per)  \stackrel{\sim}{\rightarrow} \Ho(kGe\per)$ given by a splendid complex $X$. Suppose that $X\otimes_{kH}-$ is perverse relative to $(\leq, p)$. 
\\Then it is perverse relative to $(\leq^{+},p)$ where $\forall Q,P$ $p$-subgroups of $G$, and $M\in N_{H}(Q)f_{Q}$-simple, $N\in N_{H}(P)f_{Q}$-simple, $(Q,M)\leq^{+}(P,N)$ if $(Q,M)\leq(P,N)$ and $Q\leq_{G} P$.
\end{lemma}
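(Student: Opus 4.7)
The plan is to verify the defining property of a perverse equivalence relative to $(\leq^{+},p)$ directly, by combining the hypothesis that $X\otimes_{kH}-$ is already perverse with respect to $(\leq,p)$ with the vertex control provided by Lemma~5.3 and the vertex-preservation of $\phi$ established in the proof of Proposition~5.4. Note that $\leq^{+}$ is a sub-relation of $\leq$ (fewer pairs compare), so the corresponding subcategory $\mathcal{I}'_{<^{+}\phi(M)}$ is in principle smaller than $\mathcal{I}'_{<\phi(M)}$; what has to be shown is that the ``extra'' indecomposables allowed by $\leq$ do not actually occur in $(X\otimes_{kH}M)_{min}$.

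Fix an indecomposable $p$-permutation $kHf$-module $M$ and write $(Q,M_{0})$ for its Brou{\'e}-Puig pair. By the vertex-preservation argument in the proof of Proposition~5.4 one has $\phi(M)=(Q,M_{0}')$ for some simple $k\bar{N}_{G}(Q)e_{Q}$-module $M_{0}'$. The perversity hypothesis with respect to $(\leq,p)$ tells us that every indecomposable summand of every term of $(X\otimes_{kH}M)_{min}$ in degree $r\neq -p(M)$ lies in $\mathcal{I}'_{<\phi(M)}$, and that the $(-p(M))$-th term decomposes as $\phi(M)\oplus L$ with $L\in\mathcal{I}'_{<\phi(M)}$. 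On the other hand, $(X\otimes_{kH}M)_{min}$ is a direct summand of $X\otimes_{kH}M$ in $\Co(kGe\per)$, and so Lemma~5.3 applies termwise: every indecomposable summand of every one of its terms has vertex contained in $Q$ up to $G$-conjugacy.

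Combining these: if $(R,N_{0})\neq\phi(M)$ is any indecomposable summand appearing in $(X\otimes_{kH}M)_{min}$, then $(R,N_{0})<\phi(M)=(Q,M_{0}')$ by the perversity hypothesis and $R\leq_{G} Q$ by Lemma~5.3, so $(R,N_{0})<^{+}\phi(M)$ by definition of $\leq^{+}$. Since $\phi(M)$ itself has vertex $Q$, the decomposition in degree $-p(M)$ has exactly the shape required by Definition~5.5 for the refined order. I do not anticipate a substantive obstacle here; the argument amounts to consolidating the previously established inputs. The only point worth flagging is the transfer of Lemma~5.3 from $X\otimes_{kH}M$ to its minimal representative, which is clear because a direct summand (in $\Co(kGe\per)$) of a $p$-permutation module with vertex $\leq_{G}Q$ itself has vertex $\leq_{G}Q$.
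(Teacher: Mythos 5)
Your proof is correct and follows the same route as the paper's one-sentence proof, which simply cites the vertex-control statement (Lemma~5.3, mistyped in the paper as ``Lemma~5.2''): since the terms of $X\otimes_{kH}M$ have vertices $\leq_{G}Q$ whenever $M$ has vertex $Q$, the extra constraint $R\leq_{G}Q$ imposed by $\leq^{+}$ is automatically satisfied. You have usefully made explicit two points the paper leaves tacit, namely the vertex-preservation of the bijection $\phi$ (drawn from the argument in the proof of Proposition~5.4, which carries over unchanged to the partial-order setting) and the passage from $X\otimes_{kH}M$ to its minimal representative $(X\otimes_{kH}M)_{min}$.
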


\begin{proof}
This should be straightforward as we have already noticed in Lemma $5.2$ that if $M$ has vertex $Q$, then the terms of $X\otimes_{kH}M$ have, up to conjugacy, smaller vertices.
\end{proof}

\begin{proposition}
Suppose given $\leq$ on $I$ as before so that we can suppose $\leq$ respects the inclusions of vertices and let $\leq^{Q}$ denotes the local partial order induced by $\Br_{Q}$ on $\Ho(kN_{H}(Q)f_{Q}\text{-proj})$. Let $X$ a splendid $kGe$-$kHf$-complex such that for all $Q$ it induces perverse equivalences $\mathcal{D}^{b}(kN_{H}(Q)f_{Q}) \stackrel{\sim}{\rightarrow} \mathcal{D}^{b}(kN_{G}(Q)e_{Q})$ relative to $\leq^{Q}$. Then $\Ho(kHf\per)  \stackrel{\sim}{\rightarrow} \Ho(kGe\per)$ is a perverse equivalence relative to $\leq$.
\end{proposition}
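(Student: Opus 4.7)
The approach is to verify Definition 5.6 for each indecomposable $p$-permutation $kHf$-module $M\in I$ separately. Fix such an $M$ of vertex $Q$, and denote by $\phi(M)\in I'$ the indecomposable $p$-permutation $kGe$-module expected to correspond to it; since $X$ is splendid, $\phi(M)$ also has vertex $Q$. Set $Y:=(X\otimes_{kH}M)_{\min}$. By Lemma 5.2 of Section 5.1, every indecomposable summand of every $Y^r$ has vertex contained in $Q$ up to $G$-conjugacy. The plan is to read off the summands of each $Y^r$ stratum by stratum in the vertex order, by analysing the Brauer quotients $\Br_{Q'}(Y)$ for $Q'\subseteq_G Q$ and translating the results back through Brou{\'e}--Puig's parametrization.

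\medskip
First I would control the vertex-$Q$ summands. By Proposition 3.2, combined with the normalizer-centralizer diagram of Section 4, one has
\[
\Br_Q(Y)\simeq X'_Q\otimes_{kN_H(Q)}\Br_Q(M)
\]
in $\Ho(kN_G(Q)e_Q\per)$, and $\Br_Q(M)$ is an indecomposable projective $kN_H(Q)f_Q$-module. The hypothesis supplies a perverse derived equivalence at $Q$ relative to $\leq^Q$, which Lemma 5.2 of the present section lifts to a perverse homotopy equivalence on projectives. Transporting back through Brou{\'e}--Puig (vertex-$Q$ indecomposable summands of $Y^r$ correspond bijectively to the indecomposable projective summands of $\Br_Q(Y^r)_{\min}$, since summands of strictly smaller vertex die under $\Br_Q$), this pins down the vertex-$Q$ part of $Y$: its vertex-$Q$ summands in degree $r\neq -p(M)$ are $<^Q\Br_Q(\phi(M))$, and in degree $-p(M)$ the vertex-$Q$ part is $\phi(M)\oplus L_Q$ with $L_Q$ a sum of vertex-$Q$ indecomposables $<^Q\Br_Q(\phi(M))$. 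Because $\leq$ restricts to $\leq^Q$ on vertex-$Q$ elements, this matches the constraint of Definition 5.6 on that stratum.

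\medskip
Second, I would handle the summands of vertex $Q'\subsetneq_G Q$. Applying $\Br_{Q'}$ to $Y$ yields $\Br_{Q'}(Y)\simeq X'_{Q'}\otimes_{kN_H(Q')}\Br_{Q'}(M)$, and the local perverse equivalence at $Q'$, lifted by Lemma 5.2, dictates the position of the vertex-$Q'$ summands of $Y$ through Brou{\'e}--Puig. Once the assumption that $\leq$ refines vertex inclusion is read as ``$\leq$ is the vertex-stratified refinement of the local $\leq^{Q''}$s'', every such summand automatically lies strictly below $\phi(M)$ in $\leq$, so no perversity constraint is violated for $r\neq -p(M)$ and these summands contribute only to the $L$-part of $Y^{-p(M)}=\phi(M)\oplus L$. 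The main obstacle is precisely this vertex-$Q'$ bookkeeping: unlike $\Br_Q(M)$, $\Br_{Q'}(M)$ is not an indecomposable projective but a general $p$-permutation $kN_H(Q')f_{Q'}$-module (with summands of vertex related to $Q/Q'$ in the quotient), so the local perverse equivalence at $Q'$ must be applied to each of its indecomposable summands and the results carefully reassembled to extract the vertex-$Q'$ content of $Y$; it is here that the refined partial-order formulation of perversity (Definition 5.6) and Lemma 5.7 are essential, allowing one to bypass the need for a single global Serre filtration. Putting the vertex-$Q$ and vertex-$Q'$ analyses together for all $Q'\subseteq_G Q$ yields Definition 5.6 for $Y$ relative to $\leq$.
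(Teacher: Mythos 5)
Your approach follows the paper's route: analyse $Y:=(X\otimes_{kH}M)_{\min}$ for $M$ of vertex $Q$, use the vertex-shrinking lemma (Lemma 5.3) to bound vertices of summands of $Y^r$, apply $\Br_Q$ and the commutative diagram to read off the vertex-$Q$ part from the local perverse equivalence at $Q$, and invoke the vertex-respecting property of $\leq$ for the rest. The first half of your argument is essentially the paper's proof.

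However, your second paragraph misdiagnoses where the work lies. You first state, correctly, that because $\leq$ refines inclusion of vertices and $\phi(M)$ has vertex $Q$, every summand of $Y^r$ of vertex strictly smaller than $Q$ lies in $\mathcal{I}'_{<\phi(M)}$ automatically. This already finishes the verification of Definition 5.6: such summands cannot violate the constraint in any degree, so nothing further about them needs to be known. But you then call the vertex-$Q'$ bookkeeping ``the main obstacle'' and propose applying $\Br_{Q'}$ and the local perverse equivalence at each $Q'\subsetneq_G Q$ to pin down the vertex-$Q'$ content of $Y$ and ``carefully reassemble'' it. This analysis is unnecessary and is exactly what the hypothesis on $\leq$ is designed to spare you: the paper's one-line proof (``if $P<Q$ then $(P,N)<(Q,M)$, so the terms of $X\otimes_{kH}M$ with vertex strictly smaller than $Q$ belong to $\mathcal{I}_{<\phi(Q,M)}$'') is the whole content of the smaller-vertex case, and no Brauer quotient at $Q'\subsetneq Q$ enters. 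The role of Lemma 5.7 and of the partial-order formulation is precisely to let you declare all sub-$Q$-vertex summands harmless in one stroke, not to facilitate a stratum-by-stratum reassembly. The only data genuinely used are the local perverse equivalence at the vertex $Q$ of $M$ (to control the degree $-p(M)$ and the multiplicity of $\phi(M)$) and the vertex-respecting property of $\leq$.

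Minor points: it would be worth spelling out that $\phi$ is defined by collating the local Brou{\'e}--Puig bijections over all $Q$ (and that this is forced by vertex preservation, which you derive from splendour), and that the minimal complex $\Br_Q(Y)_{\min}$ can differ from $\Br_Q(Y)$ only by deleting contractible pieces built from vertex-$Q$ summands, so the transfer of information through Brou{\'e}--Puig is on the level of homotopy classes rather than literal term-by-term identifications. Neither of these affects the correctness of the argument, but the second deserves a sentence if this were to be written out in full.
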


\begin{proof}
We refer to our previous commutative diagram and use that if $P<Q$ then $(P,N)<(Q,M)$ so that the terms of $X\otimes_{kH}M$ with vertex strictly smaller than $Q$ belongs to $\mathcal{I}_{<\phi(Q,M)}$.
\end{proof}

We now have proved enough to state the following equivalence:

\begin{theorem}
Suppose given a partial order $\leq$ on $I$ that respects the inclusions of vertices and a locally compatible perversity datum $(\leq, p)$. Then the splendid equivalence $X\otimes_{kH}-: \Ho(kHf\per)  \stackrel{\sim}{\rightarrow} \Ho(kGe\per)$ is perverse relative to $\leq$ if and only if for all $Q\leq P$, it induces perverse equivalences $\mathcal{D}^{b}(kC_{H}(Q)f_{Q}) \stackrel{\sim}{\rightarrow} \mathcal{D}^{b}(kC_{G}(Q)e_{Q})$ relative to $\leq^{Q}$. 
\end{theorem}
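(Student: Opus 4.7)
The theorem is essentially a packaging result: both directions are obtained by composing Proposition 4.2, the Clifford-theoretic bridge between centralizer and normalizer levels, with the earlier results of this section relating global perverse homotopy equivalences to local normalizer-level derived equivalences (Propositions 5.3 and 5.7). My plan is to simply assemble these results, noting that the partial-order formulation of Definition 5.5 introduces no new difficulty.

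For the $(\Rightarrow)$ direction, I would follow the argument of Proposition 5.4. Starting from the global perverse homotopy equivalence relative to $\leq$, I would apply Proposition 5.3 to obtain, for every $p$-subgroup $Q\leq P$, a perverse derived equivalence $\mathcal{D}^{b}(kN_{H}(Q)f_{Q}) \stackrel{\sim}{\rightarrow} \mathcal{D}^{b}(kN_{G}(Q)e_{Q})$ relative to $\leq^{Q}$. I would then invoke Proposition 4.2, whose applicability is guaranteed by the local compatibility of $(\leq, p)$, to descend each of these to the desired perverse equivalence $\mathcal{D}^{b}(kC_{H}(Q)f_{Q}) \stackrel{\sim}{\rightarrow} \mathcal{D}^{b}(kC_{G}(Q)e_{Q})$ relative to $\leq^{Q}$.

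For the $(\Leftarrow)$ direction, my plan is to run the same chain backwards. Starting from perverse derived equivalences at every centralizer level relative to $\leq^{Q}$, the converse implication of Proposition 4.2---legitimate because the local compatibility hypothesis, via Lemma 4.1, ensures that the filtration and perversity function on simple $kN_{H}(Q)f_{Q}$-modules are constant on Clifford orbits of simple $kC_{H}(Q)f_{Q}$-modules---lifts these to perverse equivalences $\mathcal{D}^{b}(kN_{H}(Q)f_{Q}) \stackrel{\sim}{\rightarrow} \mathcal{D}^{b}(kN_{G}(Q)e_{Q})$ relative to $\leq^{Q}$. Since $\leq$ respects inclusions of vertices by hypothesis, Proposition 5.7 then applies directly and delivers the desired global perverse homotopy equivalence $\Ho(kHf\per) \stackrel{\sim}{\rightarrow} \Ho(kGe\per)$ relative to $\leq$.

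The main point requiring care is a purely bookkeeping one: I must verify that the local partial order $\leq^{Q}$ induced by $\Br_{\Delta Q}$ corresponds correctly on both the centralizer and the normalizer sides under Brou{\'e}--Puig's parametrization, so that each cited proposition applies with the order relative to which it was established. I expect no substantive obstacle beyond this alignment of data, since the theorem amounts to the joint assertion that the commutative diagram of Section 3 and the Clifford-theoretic descent of Section 4 are compatible with the notion of perverse equivalence relative to a partial order.
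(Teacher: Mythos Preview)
Your proposal is correct and follows exactly the paper's own approach: the theorem is indeed a packaging statement, and the paper's proof amounts to nothing more than the sentence ``We now have proved enough to state the following equivalence,'' i.e.\ the composition of Proposition 4.2 with the global-to-normalizer and normalizer-to-global results of Section 5 that you cite. Your numbering is off by one or two in places (e.g.\ your ``Proposition 5.7'' is the paper's Proposition 5.9, and your ``Definition 5.5'' is the paper's Definition 5.7), but the logical chain you describe is precisely the intended one.
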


\begin{remark}
The attentive reader might have spotted that we included in our previous result all subgroups of $P$... hence also the trivial subgroup! This amounts to include in our `local' data the perverse equivalence $\mathcal{D}^{b}(kHf) \stackrel{\sim}{\rightarrow} \mathcal{D}^{b}(kGe)$. In the following section, we shall try to understand how much of an obstruction on the trivial subgroup this condition really is. 
\end{remark}

At this stage, one might not be entirely satisfied with the previous result as our hope was to obtain a result in the flavour of Bouc-Rouquier's theorem (cf. Proposition $1.1$). It appears that the next thing to do would be to develop a reasonable notion of `perverse stable equivalence'.


\begin{thebibliography}{}

\bibitem[BoXu08]{BoXu} R. Boltje and B. Xu. On $p$-permutation equivalences: between Rickard equivalences and isotypies, \textit{Trans. Amer. Math. Soc.} \textbf{360}, 5067-5087, 2008. 

\bibitem[Bro85]{Br1} M. Brou{\'e}. On Scott modules and $p$-permutation modules: an approach through the Brauer morphism, \textit{Proc. Amer. Math. Soc} \textbf{93}, 401-408, 1985.

\bibitem[Bro90]{Br2} M. Brou{\'e}. Isom{\'e}tries parfaites, types de blocs, cat{\'e}gories d{\'e}riv{\'e}es, \textit{Ast{\'e}risque} \textbf{181-182}, 61-92,1990.

\bibitem[Bro94]{Br3} M. Brou{\'e}. Equivalences of Blocks for Group Algebras. In \textit{Finite dimensional algebras and related topics}, Kluwer Academic Press, 1-26, 1994.

\bibitem[CaRi01]{CaRi} M. Cabanes and J. Rickard. Alvis-Curtis duality as an equivalence of derived categories. In \textit{Modular Representation Theory of Finite Groups}, de Gruyter, Berlin, 157-174, 2001.

\bibitem[ChRo08]{ChRou1} J. Chuang and R. Rouquier. Derived equivalences for symmetric groups and $\mathfrak{sl}_{2}$-categorification, \textit{Trans. Amer. Math. Soc} \textbf{353}(7) 2897-2913, 2001.

\bibitem[ChRo]{ChRou} J. Chuang and R. Rouquier. Calabi-Yau algebras and perverse equivalences, in preparation.

\bibitem[CrRo10] {CraRou} D. Craven and R. Rouquier. Perverse equivalences and Brou{\'e}'s conjecture, arXiv:1010.1378v1, 2010.

\bibitem[Dad66]{Da66} E.C. Dade. Blocks with cyclic defect groups. \textit{Ann. of Math.} \textbf{84}, 936-958, 1966. 

\bibitem[Ma96]{Ma} A. Marcus. On equivalences between blocks of group algebras: reduction to the simple components, \textit{J. Algebra} \textbf{184}, 372-396, 1996.

\bibitem[Oku98]{Ok98} T. Okuyama. Some examples of derived equivalent blocks of finite groups, preprint, 1998.

\bibitem[Ric89]{Ri1} J. Rickard. Derived categories and stable equivalences, \textit{J. Pure Appl. Algebra} \textbf{61}(3): 303-317, 1989.

\bibitem[Ric96]{Ri2} J. Rickard. Splendid equivalences: derived categories and permutation modules, \textit{Proc. London Math. Soc.} \textbf{72}, 331-358, 1996. 

\bibitem[Rou94]{Rou1} R. Rouquier. The derived category of blocks with cyclic defect groups. In \textit{Derived Equivalences for Group Rings, Lectures Notes in Math.} \textbf{1685}, 199-220, 1998.

\bibitem[Rou01]{Rou3} R. Rouquier. Block theory via stable and Rickard equivalences. In  \textit{Modular representation of finite groups}, de Gruyter, Berlin, 101-146, 2001.

\bibitem[Rou]{Rou2} R. Rouquier. Local constructions in block theory, in preparation.


\bibitem[Ser78]{Se} J.-P. Serre. \textit{Repr{\'e}sentations lin{\'e}aires des groupes finis}, 3rd edition, Herman, 1978.



\end{thebibliography}
\end{document}